\newcommand\ZZ{\mathbb{Z}} 
\newcommand\RR{\mathbb{R}} 
\newcommand\CC{\mathbb{C}} 
\newcommand\NN{\mathbb{N}} 
\newcommand\QQ{\mathbb{Q}}
\newcommand\UU{\mathbb{U}}
\newcommand\VV{\mathbb{V}}
\newcommand\qD{{\sqrt{D}}}
\newcommand{\qr}[1]{{\QQ({#1})}}
\newcommand\zk{\ZZ_K}
\newcommand\stabo{\boldsymbol{S}}
\newcommand\nf{\small{\hbox{$\sqrt{d\!+\!1}$}}}
\newcommand\tinynf{\scriptstyle{\sqrt{d\!+\!1}}}
\newcommand\nft{\small{\hbox{$\sqrt{d\!-\!3}$}}}
\newcommand\invnf{\tfrac{1}{\sqrt{\!d\!+\!1}}}
\newcommand{\SIC}{SIC}
\newcommand{\SICs}{SICs}
\newcommand{\disp}[1]{{\small${#1}$\normalsize}}
\newcommand\g{{\gamma}}
\newcommand\Xd{X_d}
\newcommand\Zd{Z_d}
\newcommand\uf{u_f}
\newcommand\hild{\CC^d}
\newcommand\zdp{\ZZ/d'\ZZ}
\newcommand\spzd{{\mathrm{Sp}_2}(\zdp)}
\newcommand\espzd{{\mathrm{ESp}_2}(\zdp)}
\newcommand\glzd{{\mathrm{GL}_2}(\zdp)}
\newcommand\CS{\mathbf{C}\!\left(\stabo\right)}
\newcommand\MS{\mathbf{M}\!\left(\stabo\right)}
\newcommand\sD{{\sigma_D}}
\newcommand\lsD{{\hat{\sigma}}_D}
\newcommand\gsk{{ \Gal{\Sd}{K}}}
\newcommand\cK{\mathcal{C}_K}
\newcommand\mm{{\mathfrak{m}}}
\newcommand\mdp{{\mathfrak{d'}}}
\newcommand\ff{\mathfrak{f}}
\newcommand{\cheb}[2]{T_{#1}\left({#2}\right)}
\newcommand{\chsh}[2]{T^*_{#1}\left({#2}\right)}
\newcommand{\chebh}[1]{T_{#1}}
\newcommand{\chshh}[1]{T^*_{#1}}
\newcommand{\z}[1]{\zeta_{#1}}
\newcommand{\Rd}[1]{\mathfrak{R}_{#1}}
\newcommand{\Sd}{\mathcal{F}_1}
\newcommand{\zSd}{\ZZ_{\Sd}}
\newcommand{\Sdr}{\mathcal{F}}
\newcommand{\ipl}[1]{{\infty_{#1}}}
\newcommand{\Gal}[2]{{\mathrm{Gal}({#1/#2})}}
\newcommand{\normN}[3]{\mathrm{N}_{#1/#2}\left(#3\right)}
\newcommand{\modx}[2]{\equiv{#1}\!\!\!\mod^{\!\!\!\times}{#2}}
\newcommand{\ord}[2]{\mathrm{ord}_{#1}({#2})}
\newcommand\bfv{v} 
\newcommand\bfw{w} 
\newcommand\bfj{j} 
\newcommand\bP{\mathbb{P}}
\newcommand\SL{\mathrm{SL}}
\newcommand\GL{\mathrm{GL}}
\newcommand\Sp{\mathrm{Sp}}
\newcommand\ESp{\mathrm{ESp}}
\newcommand\U{\mathrm{U}}
\newcommand\EU{\mathrm{EU}}
\newcommand\PU{\mathrm{PU}}
\DeclareMathOperator{\Aut}{Aut}
\DeclareMathOperator{\EAut}{EAut}
\DeclareMathOperator{\Tr}{Tr}
\newcommand\mapsfrom{\leftmapsto} 
\newcommand\ph{\varphi}
\newcommand{\vev}[1]{\langle #1 \rangle}
\newcommand{\smat}[1]{{\bigl(\begin{smallmatrix}#1\end{smallmatrix}\bigr)}}
\newtheorem{theorem}{Theorem}
\newtheorem{proposition}[theorem]{Proposition}
\newtheorem{lemma}[theorem]{Lemma}
\newtheorem{corollary}[theorem]{Corollary}
\newtheorem{conjecture}[theorem]{Conjecture}
\newtheorem{remark}[theorem]{Remark}
\newtheorem{definition}[theorem]{Definition}
\newcommand{\blue}[1]{{\color{blue}{#1}}} 
\renewcommand{\blue}[1]{{#1}} 
\DeclareMathOperator{\rank}{rank}
\begin{document}

\baselineskip=17pt

\title[Generating Ray Class Fields of Real Quadratic Fields $\ldots$]{Generating Ray Class Fields of Real Quadratic Fields\\ via Complex Equiangular Lines}
\author[M. Appleby]{Marcus Appleby}
\address{Centre for Engineered Quantum Systems,  School of Physics, University of Sydney}
\email{marcus.appleby@sydney.edu.au}
\author[S. Flammia]{Steven Flammia}
\address{Centre for Engineered Quantum Systems,  School of Physics, University of Sydney}
\email{steven.flammia@sydney.edu.au}
\author[G. McConnell]{Gary McConnell}
\address{Controlled Quantum Dynamics Theory Group, Imperial College, London}
\email{g.mcconnell@imperial.ac.uk}
\author[J. Yard]{Jon Yard}
\address{Institute for Quantum Computing, Department of Combinatorics \& Optimization, University of Waterloo and Perimeter Institute for Theoretical Physics, Waterloo, ON}
\email{jyard@uwaterloo.ca}

\date{}

\begin{abstract}
For certain real quadratic fields~$K$ with sufficiently small discriminant we produce explicit unit generators for specific ray class fields of~$K$ using a numerical method that arose in the study of complete sets of equiangular lines in~$\hild$ (known in quantum information as symmetric informationally complete measurements or \SICs). 
The construction in low dimensions suggests a general recipe for producing unit generators in infinite towers of ray class fields above arbitrary real quadratic~$K$, and we summarise this in a conjecture.   There are indications~\cite{kopp,kopp2} that the logarithms of these canonical units are related to the values of~$L$-functions associated to the extensions, following the programme laid out in the Stark Conjectures.
\end{abstract}

\subjclass[2010]{Primary 11R37; Secondary 42C15}

\keywords{Ray Class Field, Real Quadratic Field, Equiangular Lines, Stark Conjectures, Hilbert's Twelfth Problem}

\maketitle

\section{Introduction}\label{sec:intro}
Abelian class field theory enables us to reduce many questions about the abelian extensions of a number field~$K$ to the arithmetic of its ring of integers~$\ZZ_K$ by defining \emph{ray class fields} attached to its ideals. 
\blue{One of the great mysteries of this subject is that even though we can prove that these fields exist~\cite{Takagi}, we nevertheless cannot in general produce explicit generators for them except in the two special cases when $K$ is either~$\QQ$ or an imaginary quadratic field~\cite{cohstev}.  
Indeed the search for such generators goes back to Hilbert's 12th problem. 
In the present paper, we shall discuss a totally new approach for finding explicit generators when $K$ is a real quadratic field. } 

Let $D\geq 2$ be a square-free integer and let~$K$ be the real quadratic field~$\QQ(\sqrt D)$. 
Let~$d\geq4$ be an integer\footnote{The construction we describe also generates ray-class fields over $\QQ(\sqrt{D})$ when $d=2$ or $3$.  We confine ourselves to the case $d\geq 4$ because it is only then that the fields  are extensions of a real quadratic field.}~\ such that $D$ is the square-free part of~$(d-1)^2 - 4 = (d+1)(d-3)$. 
By Lemma~\ref{pellemma} below, there are infinitely many such~$d$ for any given~$D$, and 
conversely, every~$d\geq4$ corresponds to a non-trivial value of~$D$. 
Let~$d'=2d$ if~$d$ is even and~$d'=d$ if~$d$ is odd. 
This paper gives evidence that certain configurations of complex lines in $\CC^d$ yield natural generators for the ray class field $\Rd{}$ of $K$ modulo $d'$ with ramification allowed at both infinite places $\ipl{1}$ and $\ipl{2}$ of $K$, where we abbreviate the ideal~$d'\zk$ to~$d'$.
\blue{For convenience we give a brief introduction to ray class fields in section~\ref{RayClassSection}.} 

Equipping $\CC^d$ with the usual hermitian inner product $(v, w) = v^\dagger w$, we consider symmetric configurations of complex lines~\cite{zauner}, pairwise equiangular as measured by the hermitian angle 
\[\cos^{-1}\left(\frac{|(\bfv,\bfw)|}{\lVert\bfv\rVert \lVert\bfw\rVert}\right) = \cos^{-1}\left(\sqrt{\Tr \Pi_v \Pi_w}\right)\]
 between lines $\CC \bfv$, $\CC \bfw$,  where $\Pi_v = \frac{vv^\dagger}{(v,v)}$ is the rank-1 hermitian projector onto $\CC v$.
It is easily shown \cite{dgs} that $\CC^d$ can contain at most $d^2$ equiangular lines.  Following established terminology in quantum information theory \cite{renes}, we say that a complete set of $d^2$ equiangular lines is \emph{Symmetric and Informationally Complete} (\emph{SIC}), or a \emph{SIC}.  The common angle of a SIC is necessarily $\cos^{-1} \bigl(\frac{1}{\sqrt{d+1}}\bigr)$ \cite{dgs}.  

\blue{
At the time of writing, explicit constructions of SICs have been published in dimensions $d = 1$--$21$, $23$, $24$, $28$, $30$, $35$, $39$, $48$, $124$, $195$, $323$ while 
published numerical evidence suggests that they exist for all $d\le 151$ as well as $d=168$, $172$, $199$, $228$, $259$, and $844$. 
SICs are conjectured to exist in every finite dimension, forming a crucial part of what is known as Zauner's conjecture~\cite{zauner,renes}.}
With one exception~\cite{hoggar}, every known SIC is unitarily equivalent to an orbit of the group $(\ZZ/d\ZZ)^2$ under the projective unitary representation taking $j$ to the image of $X_d^{j_1} Z_d^{j_2}$ in the projective unitary group $\PU(d)$, where 
\begin{equation}\label{xandz}
\Xd = \left( 
\begin{array}{cccccc}
0 & 0 & 0 & \ldots & 0 & 1 \\
1 & 0 & 0 & \ldots & 0 & 0 \\
0 & 1 & 0 & \ldots & 0 & 0 \\
0 & 0 & 1 & \vspace{-2.5pt}\ldots & 0 & 0 \\
\vdots & \vdots & \vdots & \ddots & \ddots & \vdots \\ 
0 & 0 & 0 & \ldots & 1 & 0
\end{array}
\right) ,\quad 
\Zd = \left( 
\begin{array}{cccccc}
1 & 0 & 0 & 0 & \ldots & 0 \\
0 & \z{d} & 0 & 0 & \ldots & 0 \\
0 & 0 & \z{d}^2 & 0 & \ldots & 0 \\
0 & 0 & 0 & \z{d}^3 & \ldots & 0 \\
\vdots & \vdots & \vdots & \vdots & \ddots & \vdots \\ 
0 & 0 & 0 & 0 & \ldots & \z{d}^{d-1}\hspace{-5pt}
\end{array}
\right)
\end{equation}
and where $\zeta_d = e^{2\pi i /d}$ is a primitive $d$th root of unity. 
All  SICs considered in this paper will be orbits of $(\ZZ/d\ZZ)^2$ under the action of this representation.  Following the terminology usual in the physics literature we say  they are \emph{Weyl-Heisenberg covariant}. 
If $\bfv \in \CC^d$ is such that $\CC \bfv$ is contained in some SIC, we call $\bfv$ and its corresponding projector $\Pi_\bfv$ \emph{fiducial}.

Given a nonzero $\bfv\in \CC^d$, let $\QQ([\bfv]) = \QQ(v_i/v_j : v_j \neq 0)$ be the minimal field of definition of the corresponding point $[\bfv] \in \bP^{d-1}$ in projective space. 
For a given SIC $E$, let $\QQ(E)$ be the minimal field of definition of the corresponding subset of $\bP^{d-1}$, i.e.\ the smallest subfield of $\CC$ containing the fields $\QQ([\bfv])$ for every $\CC \bfv \in E$.  \blue{We refer to $\QQ(E)$ as the SIC-field.}
Then we have the following result and subsequent natural conjecture:

\begin{proposition}\label{rcfobs}
Let $d$ be any one of the following dimensions: 
\begin{equation}\label{dimz}
	4, 5, 6, 7, 8, 9, 10, 11, 12, 13, 14, 15, 16, 17, 18, 19, 20, 21, 24, 28, 30, 35, 39, 48.
\end{equation}
\blue{Then $\CC^d$ contains at least one SIC for which the corresponding SIC field is equal to  $ \mathfrak{R}$, the ray class field of~$K = \QQ(\sqrt D)$ with conductor~$d'\ipl{1}\ipl{2}$. 
Moreover, for every other known SIC in dimension $d$ the  SIC field  is a finite Galois extension of $\QQ$ containing $\mathfrak{R}$.}
\end{proposition}
\blue{
\begin{proof}
The proof involves lengthy computations using \emph{Magma}~\cite{magma}. 
The main steps are indicated in Section~\ref{very}.
\end{proof}
}
\begin{conjecture}\label{rcfconjecture}
The statement of Proposition~\ref{rcfobs} holds for every dimension $d\ge 4$.
\end{conjecture}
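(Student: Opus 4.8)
The plan is to split Conjecture~\ref{rcfconjecture} into two logically independent assertions that must each be established for \emph{every} $d\geq 4$: \emph{(i)} that $\bC^d$ contains at least one Weyl--Heisenberg SIC, and \emph{(ii)} that each resulting orbit $E$ generates a field $\bQ(E)$ which is a finite Galois extension of $\bQ$ containing $\mathfrak{R}$, with at least one orbit attaining $\bQ(E)=\mathfrak{R}$ exactly. Assertion \emph{(i)} is Zauner's conjecture on SIC existence, and I expect it to be the principal obstacle; assertion \emph{(ii)} is a statement in class field theory that I would attack conditionally on \emph{(i)}, since the Galois-theoretic machinery is largely insensitive to \emph{how} a fiducial is produced.

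For existence I would pursue the Stark-theoretic route foreshadowed in the abstract, abandoning numerical search in favour of a closed form. Fixing the order-$3$ Zauner element of $\Clifd$, one seeks a distinguished fixed point of its action on $\bP^{d-1}$ whose projective coordinates are explicit algebraic functions of the canonical Stark units attached to $\mathfrak{R}/K$. Here Lemma~\ref{pellemma}, which guarantees the fundamental solution of the associated Pell equation, furnishes the unit of infinite order that should drive the construction. The SIC overlap equations
\[
\Tr\!\left(\Pi_{\bfv}\,\Pi_{\Xd^{j_1}\Zd^{j_2}\bfv}\right)=\tfrac{1}{d+1}
\qquad\text{for all } j=(j_1,j_2)\in(\bZ/d\bZ)^2\setminus\{0\}
\]
would then have to be verified as identities among these units, uniformly in $d$.

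Granting existence, I would handle \emph{(ii)} by two inclusions. For $\mathfrak{R}\subseteq\bQ(E)$, I would exploit the Galois action of $\Gal{\overline{\bQ}}{\bQ}$ on the set of SICs: the stabiliser of a fixed fiducial is governed by a subgroup of the projective extended Clifford group $\PEC(d)$, whose structure is controlled by $\espzd$. Matching this action against the ray class group via the Artin reciprocity map, and checking that ramification is confined to $d'$ and to the two infinite places $\ipl{1},\ipl{2}$, yields the containment. For the reverse inclusion I would show that the compositum $\bQ(E)\cdot K$ is abelian over $K$ with conductor dividing $d'\zk\cdot\ipl{1}\ipl{2}$, whence $\bQ(E)\cdot K\subseteq\mathfrak{R}$ by the defining property of the ray class field; combined with $K\subseteq\bQ(E)$ this gives $\bQ(E)\subseteq\mathfrak{R}$ and hence equality for the distinguished orbit. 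The ``every other orbit'' clause then follows because the full Galois closure is the compositum of the conjugate orbit fields, each abelian over $K$ and so each Galois over $\bQ$ and containing $\mathfrak{R}$.

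The decisive difficulty is existence. No construction of SICs is known that is uniform in $d$: every present method terminates in a finite exact-arithmetic or numerical computation for a single dimension, and there is no independent input forcing the overlap identities above to hold in all dimensions at once. Converting the Stark-unit ansatz into a proof would require both an unconditional form of the relevant Stark conjecture and a dimension-independent verification of the overlaps---a double obstacle placing Conjecture~\ref{rcfconjecture} well beyond current techniques. By contrast, the field-identification step \emph{(ii)} is plausibly within reach once a canonical family of fiducials with the conjectured arithmetic is in hand, so I would expect progress on the conjecture to be gated almost entirely by progress on Zauner's conjecture.
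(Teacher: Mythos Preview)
This statement is a \emph{Conjecture} in the paper, not a proved result; the paper offers no proof and does not claim one. Proposition~\ref{rcfobs} is verified by explicit computation for the finite list of dimensions in~(\ref{dimz}), and Conjecture~\ref{rcfconjecture} simply asserts that the same phenomenon persists for all $d\ge 4$. So there is nothing to compare your proposal against: the paper's ``proof'' does not exist.

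Your write-up is a research outline rather than a proof, and you say as much in your final paragraph. That self-assessment is accurate: assertion~(i) is Zauner's conjecture, which is open, and your Stark-unit ansatz for producing a fiducial in closed form is precisely the speculative programme the authors allude to in their abstract, not an available tool. Two further points about your sketch of~(ii) are worth flagging. First, the reverse inclusion $\bQ(E)\subseteq\mathfrak{R}$ is \emph{false} for many orbits: the paper explicitly lists orbits in~(\ref{remainingorbits}) for which $\bQ(E)$ strictly contains $\mathfrak{R}$, so a blanket conductor-divisibility argument of the form ``$\bQ(E)\cdot K$ is abelian over $K$ with conductor dividing $d'\zk\ipl{1}\ipl{2}$'' cannot go through without first singling out the distinguished orbit by some independent criterion. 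Second, your closing sentence asserts that each of the other orbit fields is ``abelian over $K$''; the paper only claims that $\bQ(E)/\bQ$ is finite Galois and contains $\mathfrak{R}$, and does not assert abelianity over $K$ for the larger fields, so that step would need separate justification.
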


\noindent These statements are a substantial strengthening of results and conjectures in~\cite{AYZ}.
\blue{As a further strengthening, at the end of Section~\ref{sec:Weil} we give a putatively complete classification of the SICs in the dimensions of Eq.~(\ref{dimz}) for which the SIC field equals $\mathfrak{R}$. 
The completeness of the classification hinges on the numerical work of Scott and Grassl~\cite{scottgrassl}, who have with high probability identified all the (Weyl-Heisenberg covariant) SICs in these dimensions.
}

After introducing the necessary geometric and number-theoretical apparatus in the next two sections, we informally introduce ray class fields and establish some properties of the relevent fields in \S\ref{RayClassSection}. 
In \S\ref{GandA} and \S\ref{canon} we relate invariants of the geometric objects to arithmetic quantities in the ray class fields and to associated canonical units.
\blue{Section~\ref{very} outlines the proofs of Proposition~\ref{rcfobs} above, and Propositions~\ref{prpconstfield},~\ref{units},~\ref{prp:ugp} below.}

\blue{\subsection{Subsequent developments and marginalia}

Since the first version of this paper appeared, explicit solutions have been published~\cite{AB,grasslscott,kopp2} in the further dimensions $23$, $124$, $195$ and $323$. 
These new solutions were in whole or in part directly inspired by the first version of this paper. 
Our original analysis has not yet been extended to include these, hence the list of dimensions in Eq.~(\ref{dimz}) is not exhaustive with respect to the known solutions.}

\blue{To see the actual solutions themselves, the reader should first consult Scott and Grassl~\cite{scottgrassl}. 
In addition to many solutions found by them, they give a comprehensive listing of solutions, both explicit and numerical, published by previous authors. 
For solutions discovered subsequently see Refs.~\cite{ABBGGL,ABBEGL,AB,fuchs,grasslscott,kopp2,scott}.  
Many solutions are available in conveniently downloadable form on the websites~\cite{exactSICsDB,zaunerDB}.}

\blue{Finally, it is worth elaborating our previous claim that the only known SICs (with the exception of the Hoggar lines~\cite{hoggar}) are Weyl-Heisenberg covariant SICs. 
Historically, at first it appeared that there exist other examples of non-Weyl-Heisenberg covariant SICs~\cite{grassl,renes}. 
However, it was then discovered~\cite{huangjun2} that these other ostensible exceptions are covariant with respect to more than one group, one of them being the Weyl-Heisenberg group. 
It is, in general, an open question whether there exist examples of non-Weyl-Heisenberg covariant SICs yet to be discovered. 
However, it can be proved~\cite{huangjun,huangjun2} that if $d$ is prime, and if a SIC has a group covariance property at all, then it must be Weyl-Heisenberg covariant.}

\section{SIC orbits under the extended Weil representation \label{sec:Weil}}
A subgroup $H \subset \U(d)$ sitting in a central extension 
\begin{equation} \label{eqn:Heis}
1 \to \U(1) \to H  \to (\ZZ/d\ZZ)^2 \to 0
\end{equation}
canonically determines a projective unitary representation $(\ZZ/d\ZZ)^2 \to \PU(d)$ via $j \mapsto [\Delta_j]$, where $\Delta_j \mapsfrom j$ is any choice of section, and where $[U] \in \PU(d)$ denotes the image of $U\in \U(d)$ in the projective unitary group. 
The $\Delta_j$ determine a  2-cocycle $(j,k)\mapsto c_{j,k} \in \U(1)$ via $\Delta_j \Delta_k = c_{j,k}\Delta_{j+k}$, by definition satisfying
 $c_{k,\ell}c_{j,k+\ell} = c_{j + k,\ell}c_{j,k}$.  In turn, this cocycle determines a skew-multiplicative pairing $(j,k) \mapsto e_{j,k} = c_{j,k} / c_{k,j}$, which satisfies
$\Delta_j \Delta_k \Delta_j^{-1} \Delta_k^{-1} = e_{j,k}I$, where $e_{j,k} \in \vev{\zeta_d}$ for every $j,k$.  It is independent of the choice of section, hence an invariant of the extension. 
The extension (\ref{eqn:Heis}) is a \emph{Heisenberg group} if $e$ is nondegenerate (i.e.\ induces an isomorphism between $(\ZZ/d\ZZ)^2$ and its dual, whence $e((\ZZ/d\ZZ)^2, (\ZZ/d\ZZ)^2) = \vev{\zeta_d}$.  

From now on let $H = \{\phi X_d^{j_1} Z_d^{j_2} : \phi \in \U(1), j \in (\ZZ/d\ZZ)^2\}$. 
Then $H \subset \U(d)$ is a Heisenberg group and the \emph{Stone-von Neumann theorem} \cite{Stone, vonNeumann} asserts that every unitary representation $\rho : H \to \U(d)$ acting as the identity on scalars has the form $\rho(h)  = U_\rho h U_\rho^{-1}$, for some $U_\rho \in \U(d)$ that is uniquely determined up to scalars. 
Thus follows the existence of the \emph{Weil representation}~\cite{Weil}, a canonical projective unitary representation of the group $\Aut^0(H)$ of automorphisms of $H$ fixing its center, taking $t \in \Aut^0(H)$ to $[U_t]$ for any $U_t$ satisfying $t(h) = U_t^{\vphantom{\dagger}} h U_t^\dagger$ for every $h \in H$. 
In quantum computing, $\Aut^0(H)$ is known as the \emph{projective generalized Clifford group} and the normalizer of $H$ in $\U(d)$ is known as the \emph{generalized Clifford group}. 
This group sits in an extension~\cite{prasad}
\begin{equation}\label{eqn:Aut0H}
0 \to (\ZZ/d\ZZ)^2 \to \Aut^0(H) \to \Sp_2(\ZZ/d\ZZ) \to 1
\end{equation}
of the subgroup $\Sp_2(\ZZ/d\ZZ) = \SL_2(\ZZ/d\ZZ)$ of $\GL_2(\ZZ/d\ZZ)$ fixing $e$. 
The action of $t\in \Aut^0(H)$ on a section $\Delta_j \mapsfrom j$ is given by 
\cite{prasad}
\begin{equation} \label{eqn:HAut0}
t(\Delta_j) = \ph_t(j) \Delta_{F_t j},
\end{equation} 
where $\ph_t \colon (\ZZ/d\ZZ)^2 \to \U(1)$ is a function and $t \mapsto F_t$ is a surjective homomorphism.  The injection in (\ref{eqn:Aut0H}) is defined by $j \mapsto (h \mapsto \Delta_j h \Delta_{-j})$ and gives an action of $(\ZZ/d\ZZ)^2$ that depends only on the pairing:
\[j(\Delta_k) = \Delta_j \Delta_k \Delta_j^{-1} = e_{j,k}\Delta_k.\]

For the section $j \mapsto \Delta_j  = \phi_j X_d^{j_1} Z_d^{j_2}\in H$ determined by phases $\phi_j \in U(1)$, 
the cocycle is given by $c_{j,k} = \phi_j\phi_k\phi_{j+k}^{-1} \zeta_d^{j_2k_1}$, while the pairing is independent of the phases: $e_{j,k} = c_{j,k}/c_{k,j} = \zeta_d^{k_1 j_2 - k_2 j_1}$.  
The specific choice $\phi_j = (-\zeta_{2d})^{j_1 j_2}$ of phases gives the section
\begin{equation}
\Delta_{\bfj} = (-\z{2d})^{j_1j_2}\Xd^{j_1}\Zd^{j_2}
\label{DisOpDef}
\end{equation}
used in \cite{AYZ, scottgrassl} and in \S\ref{GandA} and \S\ref{canon}, which satisfies
 $\Delta_j^{-1} = \Delta_{-j}$ and $c_{j,k} = (-\zeta_{2d})^{j_2 k_1 - j_1 k_2} \in \vev{\zeta_{d'}}$.
When $d$ is odd, then  $-\zeta_{2d} = \zeta_d^{2^{-1} \textrm{ mod } d}$, in which case the following hold \cite{prasad}: $e$ and $c$ have the same isometry group, the phase in (\ref{eqn:HAut0}) is a  homomorphism $\ph_t \colon (\ZZ/d\ZZ)^2 \to \vev{\zeta_d}$ and the extension (\ref{eqn:Aut0H}) splits as a semidirect product $\Aut^0(H) \simeq (\ZZ/d\ZZ)^2\rtimes \Sp_2(\ZZ/d\ZZ)$ via the injective homomorphism taking $F\in \Sp_2(\ZZ/d\ZZ)$ to the automorphism of $H$ defined by $F(\Delta_j) = \Delta_{Fj}$. 
This splitting identifies each automorphism $t\in \Aut^0(H)$ with a pair $t=(j_t,F_t)$.  We henceforth identify $(\ZZ/d\ZZ)^2$ and $\Sp_2(\ZZ/d\ZZ)$ with their images $j \mapsto (j,1)$ and $(0,F) \mapsfrom F$ in $\Aut^0(H)$ under this splitting. The extension (\ref{eqn:Aut0H}) does not split when $4\mid d$, but 
for general $d$, there is a surjective homomorphism $(\ZZ/d\ZZ)^2 \rtimes \Sp_2(\ZZ/d'\ZZ) \to \Aut^0(H)$ that is an isomorphism when $d$ is odd, but has kernel $\simeq (\ZZ/2\ZZ)^3 \simeq \ker(\SL_2(\ZZ/d'\ZZ) \to \SL_2(\ZZ/d\ZZ))$ when $d$ is even~\cite{appleby05}. 

Let $\EAut^0(H)$ be the group of automorphisms of $H$ acting either as the identity or by complex conjugation on the center. 
It may be viewed as the projectivization of the \emph{extended Clifford group}, defined in \cite{appleby05} as the normalizer of $H$ in the group $\EU(d)$ of all unitary and anti-unitary operators in dimension $d$~\cite[Chap.~26]{wigner}. 
It is generated by $\Aut^0(H)$ together with complex conjugation in the standard basis.  
It lies in an extension 
\[0 \to (\ZZ/d\ZZ)^2 \to \EAut^0(H) \to \ESp_2(\ZZ/d\ZZ) \to 1\]
of the determinant $\pm 1$ subgroup $\ESp_2(\ZZ/d\ZZ) \subset \GL_2(\ZZ/d\ZZ)$, 
with complex conjugation mapping to $\smat{1 & 0 \\ 0 & -1}$.
There is similarly a surjective homomorphism $(\ZZ/d\ZZ)^2 \rtimes \ESp_2(\ZZ/d'\ZZ) \to \EAut^0(H)$.
\blue{The SICs we consider are orbits under the action of $(\ZZ/d\ZZ)^2$.  It follows that in a given dimension they  
decompose into a set of  orbits of $\ESp_2(\ZZ/d'\ZZ)$.
Aside}\footnote{In $d=3$ there is an uncountably infinite pencil of inequivalent \SICs{} with (necessarily) transcendental fields of definition apart from special points~\cite[\S10]{AYZ},\cite{huangjun}. For an in-depth look at the case $d=3$, see also~\cite{hughston}.} from dimension $3$, there are only finitely many such orbits in every known case. 

\blue{
\begin{proposition}\label{prpconstfield}:
The SIC field is constant on each orbit of $\EAut^0(H)$.  In particular, if Proposition~\ref{rcfobs} holds for one fiducial on a given orbit of $\EAut^0(H)$ then it holds for all.  
\end{proposition}
\begin{proof}
See Section~\ref{very}.
\end{proof}
}
We define the \emph{stabilizer group} of a fiducial as the subgroup of $(\ZZ/d\ZZ)^2\rtimes \ESp_2(\ZZ/d'\ZZ)$ leaving it invariant. 
Every known orbit contains a fiducial such that
\begin{itemize}
\item The stabilizer group $\boldsymbol{S}_0$ is contained in $\ESp_2(\ZZ/d'\ZZ)$.
\item $\boldsymbol{S}_0$ contains an order-3 element of $\spzd$ with trace $\equiv -1\bmod d$.
\end{itemize}
We refer to such a fiducial as \emph{centred} (in \cite{AYZ} such fiducials were called ``simple''). 
As observed in \cite{scottgrassl}, the order-3 element can always be chosen to be conjugate to either $F_z = \smat{0 & d-1 \\ d+1 & d-1}$ or 
$F_a = \smat{1 & d+3 \\ 4d/3 - 1 & d-2}$, the latter possibility only occurring when $d\equiv 3 \bmod 9$. 
We note that $\boldsymbol{S}_0$ is in fact always cyclic in the known instances for $d\ge 4$.

In the enumeration scheme of~\cite{scottgrassl} the orbits for $d\ne 3$ are specified by the dimension followed by a letter, and for future reference we also list here the type of stabilizer ($F_z$ or $F_a$). 
For dimensions in the list (\ref{dimz}), the orbits that are known to generate $\mathfrak{R}$ over the rationals are as follows:
\begin{align}
[\text{$F_z$-type}]\ & 4a, 5a, 6a, 7b, 8b, 9ab, 10a, 11c, 13ab, 14ab, 15d, 16ab, 17c, 18ab, 19e, 
\label{rcfdimz}
\\ 
&\hspace{8 cm} 20ab, 24c, 28c, 35j, \nonumber\\
\label{rcfdima}
[\text{$F_a$-type}]\ & 12b, 21e, 30d, 39ij, 48g.
\end{align}
Here, two or more letters indicates several orbits---for instance $13ab$ is shorthand for $13a$, $13b$ and $39a\text{-}f$ means $39a, \ldots, 39f$. 
\blue{Scott and Grassl~\cite{scottgrassl} have, with high probability, identified every orbit for $d\le 48$ (although in many cases the solutions are only known numerically). For the dimensions in list~\eqref{dimz} the  orbits they identify additional to those in lists~\eqref{rcfdimz} and~\eqref{rcfdima} are
\begin{align}
[\text{$F_z$-type}]\ & 7a, 8a, 11ab, 12a, 15abc, 17ab,  19abcd, 21abcd, 24ab, 28ab, 30abc, 
\label{remorbsz}
\\ 
&\hspace{7 cm} 35a\text{-}i, 39a\text{-}f, 48abcdf, \nonumber
\\ 
 [\text{$F_a$-type}]\ & 39gh, 48e.
\label{remorbsa} 
\end{align}
Explicit solutions have been calculated for all of these orbits.  In every case it is found that the field generated contains, and is strictly larger than $\mathfrak{R}$.  For a detailed analysis of these fields, and those generated by the SICs in lists~\eqref{rcfdimz} and~\eqref{rcfdima}, see refs.~\cite{AYZ,MAIB} .

Finally let us note that for lists~\eqref{rcfdimz} and~\eqref{rcfdima} the number of orbits in each dimension is the same as the class number of $K$ for that dimension. 
This is likely not an accident, as we discuss in \S\ref{GandA}. 
}

\section{Pell's equation and towers of ray class fields over~\texorpdfstring{$K$}{K}}\label{pell}
We now prove the assertions made in the introduction about infinite sequences of dimensions $d_1,d_2,\ldots$ all of which give rise to the same value of~$D$. 
\blue{As we shall now show, the dimensions~$d_k$ are indexed precisely by the solutions of the generalised Pell's equation 
$$X^2-DY^2 = 4,$$
which form a rank-1~$\ZZ$-module by the theory of binary quadratic forms~\cite{harv}. }
Conjecturally these give us `towers' of ray class fields lying above each quadratic field~$K$.

Fix~$D$,~$K=\qr{\qD}$ as above and write~$\ipl{1}$ for the place associated to the embedding of~$K$ into~$\RR$ under which~$\qD$ is sent to the positive square root of~$D$. 
Let~$\uf\in\zk^\times$ be the fundamental unit of~$K$ which is~$>1$ under~$\ipl{1}$. 
If the norm of~$\uf$ is~$-1$ then set~$u_D=\uf^2$; otherwise set~$u_D=\uf$. 
In other words,~$u_D$ is the first power of~$\uf$ of norm~1.

\begin{lemma}\label{pellemma}
The following are equivalent.
\begin{itemize}
	\item[(i)] $d$ \it is a positive integer such that the square-free part of~$(d-1)^2 - 4$ is equal to~$D$
	\item[(ii)] $\frac{d-1}{2}$ \it is the rational part of~$u_D^r$ for some~$r\in\NN$.
\end{itemize}
\end{lemma}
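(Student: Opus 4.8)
The plan is to translate the statement about the "rational part" of powers of $u_D$ into a statement about solutions of a Pell-type equation, and then match those solutions against the defining condition on $d$. Write $u_D = \tfrac{a + b\sqrt{D}}{2}$ in lowest terms (so $a \equiv b D \bmod 2$), with $a, b > 0$ since $u_D > 1$ under $\infty_1$ and $\mathrm{N}(u_D) = 1$. The "rational part" of $u_D^r$ means the following: writing $u_D^r = \tfrac{a_r + b_r \sqrt D}{2}$, condition (ii) says $\tfrac{d-1}{2} = \tfrac{a_r}{2}$, i.e. $d - 1 = a_r$. Since $\mathrm{N}(u_D^r) = 1$ we have $a_r^2 - D b_r^2 = 4$, so $(d-1)^2 - 4 = a_r^2 - 4 = D b_r^2$. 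Conversely, if $(d-1)^2 - 4 = D b^2$ for some integer $b$, then $\tfrac{(d-1) + b\sqrt D}{2}$ is a unit of norm $1$ in $\mathbb{Z}_K$ that is $> 1$ under $\infty_1$ (both $d-1$ and $b$ positive, as $d \geq 4$ forces $(d-1)^2 > 4$), hence a positive power of $u_D$ by definition of the fundamental unit, which gives (ii).

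So the two things I must reconcile are: (i) "the square-free part of $(d-1)^2 - 4$ equals $D$", and the condition just extracted, "$(d-1)^2 - 4 = D b^2$ for some $b \in \mathbb{Z}$", i.e. "$D$ divides $(d-1)^2-4$ and the quotient is a perfect square". These are literally the same statement: $(d-1)^2 - 4$ has square-free part $D$ exactly when $(d-1)^2 - 4 = D m^2$ for some positive integer $m$. Here I should be slightly careful that $D$ is square-free by hypothesis, so "square-free part equals $D$" is unambiguous and is equivalent to $(d-1)^2-4$ being $D$ times a square. The main step is therefore organizational: set up the notation for the rational part, verify the norm computation $a_r^2 - D b_r^2 = 4$, and check the sign/positivity conditions that let me identify a norm-$1$ unit $> 1$ with a genuine \emph{positive} power of $u_D$ (rather than a power of $\uf$ of the wrong parity, or a negative power).

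The one genuine subtlety — and the step I expect to need the most care — is the parity and half-integer bookkeeping. When $D \equiv 1 \bmod 4$ the ring of integers is $\mathbb{Z}[\tfrac{1+\sqrt D}{2}]$ and $u_D$ can have genuinely half-integer coordinates, so "$\tfrac{d-1}{2}$ is the rational part" must be read as: the rational coordinate of $u_D^r$ in the basis $\{1, \sqrt D\}$ equals $\tfrac{d-1}{2}$, allowing $d-1$ to be odd. I will need to confirm that for every $r$ the element $\tfrac{(d-1) + b_r\sqrt D}{2}$ (with $d - 1 = a_r$) really is algebraic-integral and really is $u_D^r$ — the former because $u_D^r \in \mathbb{Z}_K$, the latter because distinct powers have distinct rational parts (the sequence $a_r$ is strictly increasing as $u_D > 1$ and $u_D^{-1} = \tfrac{a - b\sqrt D}{2} < 1$ forces $a_{-r} < a_r$), so the map $r \mapsto a_r$ from $\mathbb{N}$ to admissible values of $d-1$ is a bijection onto the solution set. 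Once that correspondence is pinned down, both implications (i)$\Leftrightarrow$(ii) fall out by the norm identity above, and as a byproduct one gets the claim from the introduction that there are infinitely many such $d$ for each $D$, since there are infinitely many $r \in \mathbb{N}$.
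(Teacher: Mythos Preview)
Your proposal is correct and follows essentially the same route as the paper: rewrite condition~(i) as the Pell-type equation $(d-1)^2 - Db^2 = 4$, identify $\tfrac{(d-1)+b\sqrt D}{2}$ as a norm-$1$ unit greater than $1$, and invoke the structure of the unit group to conclude it is a positive power of~$u_D$. The only cosmetic difference is that where you worry about the parity/half-integer bookkeeping to verify algebraic integrality, the paper dispatches this in one line by observing that the element is a root of the monic integral polynomial $X^2 - (d-1)X + 1$.
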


\begin{proof}
Statement~(i) is equivalent to saying there exists~$y\in\ZZ$ such that~$\left(\tfrac{d-1}{2}\right)^2-D(\tfrac{y}{2})^2=1$, 
which in turn is equivalent to~$\frac{d-1}{2}+\frac{y}{2}\qD$ being an element of~$K$ of norm~1. 
But it is also a root of the monic integral polynomial~\hbox{$X^2-(d-1)X+1$}, hence a unit. 
Therefore~\cite[\S 11B]{harv} it is a power of~$u_D$, proving~(ii). 
The converse is just a restatement of the definitions.
\end{proof}

\begin{corollary}
For every square-free value of~$D\geq2$ there exist infinitely many values of~$d$ such that the square-free part of~$(d-1)^2 - 4 = (d+1)(d-3)$ is equal to~$D$.
\end{corollary}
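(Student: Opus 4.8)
The corollary follows almost immediately from Lemma~\ref{pellemma}, so the plan is to extract from that equivalence the infinitude of suitable~$d$. The key point is that condition~(ii) of the lemma produces, for \emph{each}~$r\in\NN$, a value of~$d$ with the required property, and distinct~$r$ give distinct~$d$.

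First I would recall that $u_D = a + b\qD$ with $a,b$ rational (in fact $a\in\tfrac12\ZZ$, $b\in\tfrac12\ZZ$, depending on whether $D\equiv 1\bmod 4$), that $\normN{K}{\QQ}{u_D} = 1$ by construction, and that $u_D > 1$ under the embedding~$\ipl{1}$. Writing $u_D^r = a_r + b_r\qD$, the rational part $a_r$ is then the ``rational part'' referred to in the lemma. By Lemma~\ref{pellemma}(ii)$\Rightarrow$(i), setting $d = 2a_r + 1$ gives a positive integer whose associated square-free discriminant part of $(d-1)^2-4$ is exactly~$D$ --- we do need $d$ to be a positive integer, which holds because $u_D^r$ has norm~$1$ and hence $a_r \ge 1$ (indeed $2a_r = u_D^r + u_D^{-r} \ge 2$), with $2a_r$ an integer since $u_D^r + \overline{u_D^r} = \Tr_{K/\QQ}(u_D^r)$ is a rational integer.

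Next I would argue that the map $r \mapsto d_r := 2a_r+1$ is injective, so that the $d_r$ are infinitely many distinct values. Since $u_D > 1$, the sequence $u_D^r$ is strictly increasing in~$r$, and because $u_D^{-r} = \overline{u_D^r} \in (0,1)$ tends to~$0$, the quantity $2a_r = u_D^r + u_D^{-r}$ is strictly increasing in~$r$ as well. Hence $r\mapsto d_r$ is strictly increasing, in particular injective, and $\{d_r : r\in\NN\}$ is an infinite set of integers each satisfying condition~(i) of the lemma, i.e.\ the square-free part of $(d_r-1)^2-4 = (d_r+1)(d_r-3)$ equals~$D$.

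This argument has no real obstacle: the only mild point to check carefully is the integrality and positivity of $d_r = 2a_r+1$, which comes down to $2a_r = \Tr_{K/\QQ}(u_D^r)\in\ZZ$ and $u_D^r + u_D^{-r} \ge 2$ by AM--GM applied to the conjugate pair of positive reals $u_D^{\pm r}$ (with equality only at $r=0$, which we exclude or which gives $d=3$, outside the range $d\ge 4$ anyway). Everything else is a direct consequence of Lemma~\ref{pellemma} together with the fact that a real unit greater than~$1$ has unbounded powers. Since this is exactly the assertion claimed in the introduction, I would keep the proof to one or two sentences, as the excerpt already does with its \texttt{\textbackslash qed}.
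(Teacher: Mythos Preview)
Your argument is correct and is exactly the approach the paper intends: the corollary is marked with an immediate \qed because Lemma~\ref{pellemma} already furnishes, for each $r\in\NN$, a value $d_r = 1 + u_D^r + u_D^{-r}$ satisfying~(i), and the paper notes right after the definition of $d_r$ that this sequence is strictly increasing. Your write-up simply makes explicit the integrality (via the trace) and the strict monotonicity (via $u_D>1$) that the paper leaves to the reader, so there is nothing substantively different here.
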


\begin{definition}
All~$d$ yielding the same value of~$D$ are indexed by the appropriate power of~$u_D$:
\begin{equation}\label{dr}
d_r=1+u_D^r+u_D^{-r}.
\end{equation}
\end{definition}
\noindent In particular,~$d_1$ is the smallest dimension corresponding to~$D$. 
We shall generally omit reference to the underlying~$D$ but~\emph{the notation~$d_r$ always applies with reference to a specific fixed value of~$D$}. 
Note that the rational part of~$u_D^r$ is given by~$\tfrac{u_D^r+u_D^{-r}}{2}=\cosh{(r\log u_D)}$.
The Chebyshev polynomials of the first kind~$\chebh{n}$ tell us how to go from the rational part of~$u_D^r$ to that of~$u_D^{rs}$:
\begin{align}\label{gosh}
\cheb{s}{\tfrac{d_r-1}{2}} \ = \ \cheb{s}{\cosh{(r\log u_D)}} \ = \ \cosh{(sr\log u_D)} \ = \ \tfrac{d_{rs}-1}{2} \ = \ \cheb{r}{\tfrac{d_s-1}{2}}.
\end{align}

\noindent
Let us define a shifted version of the functions~$\chebh{n}$ by
\[
\chsh{n}{x} = 1+2\cheb{n}{\tfrac{x-1}{2}},
\]
and for convenience extend it to negative~$n$ by defining~$\chshh{-n} = \chshh{n}$. 
Equation~(\ref{gosh}) may be rephrased in terms of the new functions~$\chshh{n}$ and rearranged to read:
\begin{equation}\label{twirl}
\chsh{r}{\chsh{s}{d_1}} = \chsh{r}{d_s} = d_{rs} = d_{sr} = \chsh{s}{d_r} = \chsh{s}{\chsh{r}{d_1}}.
\end{equation}
It is also easy to show that~$d_1,d_2,d_3,\ldots$ is a strictly increasing sequence of positive integers. 

\begin{proposition}\label{towers}
Within each sequence $d_1,d_2,\dotsc$ of dimensions corresponding to a fixed~$D$, there are infinitely many distinct infinite subsequences $d_{k_1},d_{k_2},\dotsc$ with the property that~$d_{k_1}\mid d_{k_2}\mid \cdots $.
\end{proposition}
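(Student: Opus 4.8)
The plan is to reduce Proposition~\ref{towers} to a divisibility criterion for the integers $d_r$ and then write down the required towers explicitly. The criterion I would prove is: \emph{if $a$ and $s$ are positive integers with $3\nmid s$, then $d_a\mid d_{as}$.} Granting this, the proposition follows at once. For each odd integer $m\ge 1$ set $k_j=m\cdot 2^{\,j-1}$ ($j=1,2,\dotsc$); each ratio $k_{j+1}/k_j=2$ is coprime to $3$, so the criterion gives $d_{k_1}\mid d_{k_2}\mid d_{k_3}\mid\cdots$, and since $(d_r)$ is strictly increasing these divisibilities are proper, so the tower is genuinely infinite. Moreover, for distinct odd $m$ the index sets $\{m\cdot 2^{\,j-1}:j\ge1\}$ are pairwise disjoint --- their members have odd part $m$ --- so the corresponding subsequences are distinct; as there are infinitely many odd $m$, we obtain infinitely many of them.

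To prove the criterion I would use~\eqref{twirl}, which gives $d_{as}=\chsh{s}{d_a}$, together with two facts about the polynomials $\chshh{n}$. First, $\chshh{n}\in\ZZ[x]$ for every $n\ge0$: substituting $y=\tfrac{x-1}{2}$ into the Chebyshev recursion $T_{n+1}=2yT_n-T_{n-1}$ and using $2T_n\!\left(\tfrac{x-1}{2}\right)=\chshh{n}(x)-1$ produces the integral recurrence
\[
\chshh{n+1}(x)=(x-1)\,\chshh{n}(x)-\chshh{n-1}(x)+(3-x),
\]
with $\chshh{0}(x)=3$ and $\chshh{1}(x)=x$, so the claim follows by induction. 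Second, evaluating this recurrence at $x=0$ gives $\chshh{n+1}(0)=3-\chshh{n}(0)-\chshh{n-1}(0)$ with $\chshh{0}(0)=3$ and $\chshh{1}(0)=0$, so the sequence $\bigl(\chshh{n}(0)\bigr)_{n\ge0}$ reads $3,0,0,3,0,0,\dotsc$ with period $3$; in particular $\chshh{s}(0)=0$ exactly when $3\nmid s$. Hence for $3\nmid s$ we may write $\chshh{s}(x)=x\,\psi_s(x)$ with $\psi_s\in\ZZ[x]$, and therefore $d_{as}=\chsh{s}{d_a}=d_a\,\psi_s(d_a)$ is a multiple of $d_a$. (Alternatively, one can argue directly with the fundamental unit: writing $v=u_D^{\,a}$ one has $d_a=v^{-1}(v^2+v+1)$ and $d_{as}=v^{-s}(v^{2s}+v^s+1)$, and as $v$ is a unit the divisibility $d_a\mid d_{as}$ reduces to $v^2+v+1\mid v^{2s}+v^s+1$ in $\ZZ[v]$, which holds precisely when $3\nmid s$, by comparing the cyclotomic factorizations of $v^{3s}-1$ and $v^s-1$.)

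Finally I would note why the hypothesis $3\nmid s$ is needed and hence why the towers ascend by powers of $2$: the same computation gives $\chshh{3}(0)=3$, so $d_{3a}\equiv3\pmod{d_a}$, and since every $d_r\ge4$ this forces $d_a\nmid d_{3a}$; any multiplier coprime to $3$ would do just as well as $2$. The only substantive point in all of this is the elementary period-$3$ behaviour of $\chshh{n}(0)$ --- equivalently, the fact that $d_a\mid d_{as}$ whenever $3\nmid s$ --- combined with the observation that $\chshh{s}$ has integer coefficients; granted these, assembling infinitely many pairwise-disjoint divisibility towers is routine bookkeeping resting only on the strict monotonicity of $(d_r)$.
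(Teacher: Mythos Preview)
Your proof is correct and follows essentially the same approach as the paper: both hinge on the criterion $d_a\mid d_{as}$ whenever $3\nmid s$, established via the period-$3$ behaviour of $\chshh{n}$ modulo $x$ (equivalently, modulo $d_a$) coming from the Chebyshev recursion. The packaging differs slightly---the paper uses the three-step recurrence $\chshh{n}(x)=x\chshh{n-1}(x)-x\chshh{n-2}(x)+\chshh{n-3}(x)$ to read off $\chshh{n}(d)\equiv\chshh{n-3}(d)\pmod d$ directly, and argues distinctness of towers by scaling a fixed coprime-to-$3$ chain by different powers of $3$ and invoking the $d_{3a}\equiv3\pmod{d_a}$ congruence, whereas you phrase the same criterion as $\chshh{s}(0)=0$ and separate towers by their odd parts---but the substance is identical.
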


\begin{proof}
Fix~$D$. 
The defining recursion 
\[
\cheb{0}{x} = 1,\,\,\, \cheb{1}{x} = x, \,\,\, \cheb{n}{x} = 2x\cheb{n-1}{x} - \cheb{n-2}{x}
\]
for the Chebyshev polynomials yields the following recursion for the shifted version:
\begin{equation*}
\chsh{0}{x} = 3,\,\,\, \chsh{\pm 1}{x} = x, \,\,\,\chsh{n}{x} = x\chsh{n-1}{x} - x\chsh{n-2}{x} + \chsh{n-3}{x}.
\end{equation*}
If~$x\in\ZZ$ then all terms are in~$\ZZ$, so for the particular case of a positive integer~$d$,
\begin{equation}\label{redux}
\chsh{n}{d}\equiv\chsh{n-3}{d}\bmod d,
\end{equation}
and in fact for any positive integer~$r$ we have $d_{nr}\equiv d_{(n-3)r} \bmod d_r$.
We now have three cases according to the congruence class of~$n$ modulo~3: 

\begin{itemize}
\item \textbf{C0:} $n\equiv0\ \bmod 3$:\ \ $\chsh{0}{d_r}=3$, implying~$d_{nr}-3$ is a multiple of~$d_r$.

\item \textbf{C1:} $n\equiv1\ \bmod 3$:\ \ $\chsh{1}{d_r}=d_r$, implying~$d_{nr}$ is a multiple of~$d_r$.

\item \textbf{C2:} $n\equiv2\ \bmod 3$:\ \ $\chsh{2}{d_r}=d_r(d_r-2)$, implying~$d_{nr}$ is a multiple of~$d_r$.
\end{itemize}
So if $(i_j)_{j\geq1}$ is any increasing sequence of integers coprime to $3$ such that $i_1 \mid i_2 \mid i_3 \mid \cdots$, it follows that $d_{i_1} \mid d_{i_2}\mid d_{i_3} \mid \cdots$.\ \ 
There are infinitely many such dimension towers because $d_s > d_r > 3 \ \forall\ s>r$, so by~\textbf{C0},~$n> m\implies d_{3^m i_k}\nmid d_{3^n i_j}$ for any $j, k$.
\end{proof}

\begin{corollary}
If Conjecture~\ref{rcfconjecture} is true, then for every square-free $D\geq2$, there exist infinitely many infinite ray class field towers above each~$\qr{\qD}$ whose successive generators may be found by constructing SICs in the $\CC^{d_{k_i}}$. 
\end{corollary}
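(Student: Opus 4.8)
The plan is to read the statement off directly from Proposition~\ref{towers} and Conjecture~\ref{rcfconjecture}, the only connective tissue being the behaviour of the auxiliary modulus $d'$ under divisibility and the functoriality of ray class fields. Fix a square-free $D\geq 2$, write $K=\bQ(\sqrt D)$ with its two infinite places $\ipl{1},\ipl{2}$, and use Proposition~\ref{towers} to choose one of the infinitely many infinite index sequences $k_1<k_2<\cdots$ with $d_{k_1}\mid d_{k_2}\mid\cdots$; by Lemma~\ref{pellemma} and the conventions of the introduction each $d_{k_i}$ is a dimension $\geq 4$ attached to this same $D$, hence to this same $K$. First I would record the elementary fact that, for positive integers, $d\mid e$ forces $d'\mid e'$: if $d$ is odd then $d'=d\mid e\mid e'$, while if $d$ is even then $e$ is even too, so $e'=2e$ and $d\mid e$ gives $d'=2d\mid 2e=e'$. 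Hence, writing $d_{k_i}'$ for the auxiliary modulus of $d_{k_i}$, the ideals $d_{k_i}'\zk$ form a divisibility chain. Recalling that $\Rd{d_{k_i}}$ is by definition the ray class field of $K$ of modulus $d_{k_i}'\zk$ with ramification allowed at the fixed set $S=\{\ipl{1},\ipl{2}\}$ of infinite places, and that for ideals $\mathfrak{m}\mid\mathfrak{n}$ of $\zk$ the ray class field of modulus $\mathfrak{m}S$ lies inside that of modulus $\mathfrak{n}S$ (dually to the canonical surjection $\mathrm{Cl}_{\mathfrak{n}S}\twoheadrightarrow\mathrm{Cl}_{\mathfrak{m}S}$), one obtains a chain
\[
\Rd{d_{k_1}}\subseteq\Rd{d_{k_2}}\subseteq\Rd{d_{k_3}}\subseteq\cdots
\]
of abelian extensions of $K$. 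By Conjecture~\ref{rcfconjecture}, for each $i$ there is a \SIC{} in $\bC^{d_{k_i}}$ carrying an orbit $E_i$ with $\bQ(E_i)=\Rd{d_{k_i}}$; since $K\subseteq\Rd{d_{k_i}}$ this field equals $K(E_i)$, so the projective coordinates of the fiducial vectors in $E_i$ generate the $i$-th layer of the tower over $K$, and together with the previous layer generate the step $\Rd{d_{k_i}}/\Rd{d_{k_{i-1}}}$ — which is exactly the assertion that the successive generators are obtained by constructing \SICs{} in the $\bC^{d_{k_i}}$. Letting the index sequence range over the infinitely many chains supplied by Proposition~\ref{towers} then yields infinitely many such towers.

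The one place where more than bookkeeping is needed is the word ``tower'': that the chain $\Rd{d_{k_1}}\subseteq\Rd{d_{k_2}}\subseteq\cdots$ does not stabilise to a single field. I would arrange this by taking the index chain of \emph{doubling} type, $k_i=2^{i-1}k_1$ — legitimate in Proposition~\ref{towers} since these indices are coprime to $3$ whenever $k_1$ is and satisfy $k_1\mid 2k_1\mid\cdots$ — and by exploiting the identity $d_{2n}=d_n(d_n-2)$, a special case of~\eqref{twirl} already used in the proof of Proposition~\ref{towers}. Since $\gcd(d_n,d_n-2)\mid 2$, every odd prime dividing $d_{k_i}-2$ is new to $d_{k_{i+1}}=d_{k_i}(d_{k_i}-2)$, and a short induction shows that the number $t_i$ of distinct prime divisors of $d_{k_i}$ — hence of $d_{k_i}'$ — tends to infinity along the chain. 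Feeding this into the ray class number formula
\[
[\Rd{d_{k_i}}:K]=\frac{4\,h_K\,\Phi\!\left(d_{k_i}'\zk\right)}{\bigl[\,\zk^{\times}:U_{d_{k_i}'}\,\bigr]},
\]
in which $\Phi(\mathfrak{m})=\lvert(\zk/\mathfrak{m})^{\times}\rvert$ and $U_e$ is the group of units $\equiv 1\bmod e$ that are positive at $\ipl{1}$ and $\ipl{2}$, completes the argument: from $\zk^{\times}=\langle-1\rangle\times\langle\uf\rangle$ one gets $[\zk^{\times}:U_e]\leq 4\,\mathrm{ord}(\uf\bmod e)\leq 4\,\lambda(e\zk)$ with $\lambda$ the exponent of $(\zk/e\zk)^{\times}$, so $[\Rd{d_{k_i}}:K]\geq h_K\,\Phi(d_{k_i}'\zk)/\lambda(d_{k_i}'\zk)$; and this lower bound diverges, since each odd prime divisor of $d_{k_i}'$ contributes a residue-unit group of even order and therefore the $2$-adic valuation of $\Phi(d_{k_i}'\zk)/\lambda(d_{k_i}'\zk)$ is at least $t_i-2$. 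Thus the layer degrees tend to infinity and each tower is genuinely infinite.

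I expect this last estimate — the divergence of $[\Rd{d_{k_i}}:K]$ along a suitably chosen sub-chain — to be the only step calling for any real care, and even it is elementary, resting only on the standard ray class number formula and the multiplicative identity $d_{2n}=d_n(d_n-2)$. Everything else is formal manipulation of moduli together with a direct appeal to Proposition~\ref{towers} and Conjecture~\ref{rcfconjecture}, in which the whole substance of the corollary resides.
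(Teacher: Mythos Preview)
Your argument is correct, and indeed more careful than the paper's, which simply marks the corollary with a \qed{} immediately after Proposition~\ref{towers}. The first half of your write-up --- checking that $d\mid e\Rightarrow d'\mid e'$, invoking the functoriality of ray class fields under divisibility of moduli, and then reading generators off Conjecture~\ref{rcfconjecture} --- is exactly the bookkeeping the paper leaves implicit.

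Where you go beyond the paper is in justifying that the chain $\Rd{d_{k_1}}\subseteq\Rd{d_{k_2}}\subseteq\cdots$ is strictly increasing. Your route via the ray class number formula and the identity $d_{2n}=d_n(d_n-2)$ works: the $2$-rank of $(\zk/d'_{k_i}\zk)^\times$ is at least the number of odd rational primes dividing $d'_{k_i}$ (each local factor at an odd prime has cyclic $2$-Sylow of order $\ge 2$), whence $\Phi/\lambda\ge 2^{t_i-1}$, and your count of new odd prime divisors along a doubling chain is sound once one notes that for $i\ge 2$ one has $d_{k_i}-2\equiv 2\pmod 4$ in the even case and $d_{k_i}-2\ge 3$ odd in the odd case. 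Restricting to doubling chains with varying $k_1$ coprime to $3$ still gives infinitely many chains, so nothing is lost.

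That said, there is a much shorter way to see the non-stabilisation, and it is available from results the paper proves anyway: by Proposition~\ref{rcfbasics}(i) (or directly, as the standard fact that the ray class field of $K$ modulo $m$ contains $K(\mu_m)$), one has $\mu_{d'_{k_i}}\subset\Rd{d_{k_i}}$, so $[\Rd{d_{k_i}}:\bQ]\ge\varphi(d'_{k_i})\to\infty$ simply because $d_{k_i}\to\infty$. This avoids the ray class number formula, the bound on $[\zk^\times:U_e]$, and the prime-counting along the chain entirely, and it works for \emph{every} chain produced by Proposition~\ref{towers}, not just the doubling ones.
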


\section{Properties of the ray class fields}\label{RayClassSection}

\subsection{Definition of ray class fields}
\blue{
By the Kronecker-Weber theorem~\cite{cohstev}, any abelian extension of~$\QQ$ is a subfield of some cyclotomic extension~$\QQ(\zeta_m)$, where~$\zeta_m$ is a primitive~$m$-th root of unity for some minimal positive integer~$m$.  
By the fundamental theorem of Galois theory, therefore, its Galois group is isomorphic under the Artin map to a quotient of~$\left( \ZZ/m\ZZ \right)^\times$, the multiplicative group of the quotient ring~$\ZZ/m\ZZ$. 
The ideal~$m\ZZ$ is called the \emph{conductor} of the extension: and indeed~$\QQ(\zeta_m)$ is the \emph{narrow ray class field of~$\QQ$ modulo~$m\ZZ$}.  
The unique infinite prime of~$\QQ$ is allowed to~\emph{ramify} here and we obtain a totally complex field.  
If on the other hand we were to restrict ramification just to the finite places dividing~$m$ then the ray class field modulo the same conductor~$m\ZZ$ would be the maximal totally real subfield~$\QQ(\zeta_m+\zeta_m^{-1})$ of~$\QQ(\zeta_m)$.  

The classical ideal-theoretic version of Artin's global reciprocity law for finite abelian extensions of any finite extension~$K$ of~$\QQ$ then leads naturally to the more general notion of the ray class fields of~$K$ modulo any conductor. 
Indeed, let~$F$ be any finite abelian extension of~$K$. 
Define~$S$ as the set of finite and infinite primes of~$K$ which ramify in~$F$; let~$I_S$ be the free abelian group on the finite primes of~$K$ which do not lie in~$S$. 
Then the classical Artin map is the homomorphism~$\phi_{F/K}: I_S \to \Gal{F}{K}$ which sends an unramified finite prime~$v$ of~$K$ to the Frobenius automorphism of~$v$ in~$\Gal{F}{K}$. 
Artin's global reciprocity law asserts that~$\phi_{F/K}$ is surjective; and moreover that there exists an integral ideal~$\ff$ of~$K$ divisible precisely by the finite primes in~$S$, such that the kernel of~$\phi_{F/K}$ always contains the group~$P_\ff$ of principal ideals generated by elements~$\alpha$ of $K$ satisfying: 
\begin{itemize}

\item $
\ord{v}{\alpha -1} \geq \ord{v}{\ff}  
$ for all~$v$ dividing~$\ff$; and

\item $\alpha$ is totally positive at the real primes of~$K$ which ramify in~$F$.
\end{itemize}
The smallest such~$\ff$ is called the conductor of~$F/K$. 
The quotient group~$I_S/P_\ff$ is called the~\emph{ray class group of~$K$ modulo~$\ff$}, and thus Artin's law asserts that the ray class group modulo~$\ff$ is mapped onto~$\Gal{F}{K}$ by the Artin map. 
Moreover, for every such ideal~$\ff$, the existence theorem of global class field theory shows that there exists a finite abelian extension~$F_\ff/K$, unramified outside~$S$, such that the kernel of the Artin map for this extension is precisely~$P_\ff$.  
This field, unique up to isomorphism, is called the~\emph{ray class field of~$K$ modulo~$\ff$}. 
}

\blue{ 
\subsection{Properties of ray class fields in the context of SICs}  
Fix a square-free integer}~$D\geq2$ and an associated dimension~$d\geq4$ \blue{as explained in the previous section, defining as always our base field to be~$K = \QQ(\sqrt{D})$. }
Recall that~$d'=d$ if~$d$ is odd or~$2d$ if~$d$ is even, and that~$\z{d'}$ is an arbitrary primitive~$d'$-th root of unity. 
Write~$\mdp$ for the modulus~$d'\ipl{1}\ipl{2}$ and let~$\!\!\!\!\mod^{\!\!\!\times}\!$ denote multiplicative congruence. 
Let~$\sD$ be the unique non-trivial element of~$\Gal{K}{\QQ}$. 
Write~$\Rd{}$ for the ray class field of~$K$ modulo~$d'$ with ramification allowed at both infinite primes; similarly we write~$\Rd{0},\Rd{1}$ and~$\Rd{2}$ when ramification is allowed respectively at no infinite primes, just at~$\ipl{1}$, and just at~$\ipl{2}$. 
\blue{We have illustrated this notation and many of the conclusions of the next proposition in the field tower diagram in Figure~\ref{Figfieldtower}.  
}

\begin{proposition}\label{rcfbasics}
\begin{itemize}
\item[(i)] $\Rd{}$ contains the~$d'$-th roots of unity~\blue{$\vev{\zeta_{d'}}$}, and~\disp{\z{d'}+\z{d'}^{-1}\in\Rd{0}}. 

\item[(ii)] $\Rd{0}$ is a non-trivial extension of~$K$.  Hence so are~$\Rd{1},\Rd{2},\Rd{}$.  

\item[(iii)] $\Rd{0}$ and~$\Rd{}$ are Galois over~$\QQ$.  

\item[(iv)] $\Rd{1}$ and~$\Rd{2}$ have isomorphic Galois groups over~$K$. 

\item[(v)] $\Rd{}$ is the compositum~$\Rd{1}\Rd{2}$, and~$\Rd{0} = \Rd{1}\bigcap\Rd{2}$. 
\end{itemize}
\end{proposition}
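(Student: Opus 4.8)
The plan is to derive all five statements from two ingredients: the realisation of the cyclotomic fields $\QQ(\muu{d'})$ and $\QQ(\z{d'}+\z{d'}^{-1})$ as ray class fields of $\QQ$, and the functoriality of ray class fields under $\Gal{K}{\QQ}=\langle\sD\rangle$ (an automorphism of $\overline\QQ$ restricting to $\sD$ on $K$ carries the ray class field of modulus $\mm$ to that of modulus $\sD(\mm)$). For (i): $\QQ(\muu{d'})$ has conductor dividing $(d')$ times the real place $\ipl{}$ of $\QQ$, and both $\ipl{1},\ipl{2}$ lie over $\ipl{}$, so $K(\muu{d'})$ is abelian over $K$ of conductor dividing $d'\ipl{1}\ipl{2}$, whence $K(\muu{d'})\subseteq\Rd{}$ and $\muu{d'}\subseteq\Rd{}$; and $\QQ(\z{d'}+\z{d'}^{-1})$ has conductor dividing $(d')$ with trivial infinite part, so $K(\z{d'}+\z{d'}^{-1})\subseteq\Rd{0}$ and $\z{d'}+\z{d'}^{-1}\in\Rd{0}$. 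For (iii): a ray class field of $K$ is Galois over $\QQ$ once its modulus is $\sD$-invariant; $\sD$ fixes the ideal $d'\zk$ and interchanges $\ipl{1}\leftrightarrow\ipl{2}$, hence fixes $d'\zk$ and $d'\ipl{1}\ipl{2}$, so $\Rd{0}$ and $\Rd{}$ are Galois over $\QQ$. For (iv): $\sD$ sends the modulus $d'\ipl{1}$ to $d'\ipl{2}$, hence induces (via the Artin isomorphisms, or by lifting $\sD$ and conjugating) an isomorphism $\Gal{\Rd{1}}{K}\xrightarrow{\sim}\Gal{\Rd{2}}{K}$. For (ii): by (i), $\Rd{0}\supseteq K(\z{d'}+\z{d'}^{-1})$, and $[\QQ(\z{d'}+\z{d'}^{-1}):\QQ]=\varphi(d')/2>2=[K:\QQ]$ whenever $\varphi(d')>4$; the finitely many remaining cases, which for $d\ge4$ are $d\in\{4,5,6\}$ (where $\QQ(\z{d'}+\z{d'}^{-1})$ equals $\QQ(\sqrt2),\QQ(\sqrt5),\QQ(\sqrt3)$ while $K=\QQ(\sqrt5),\QQ(\sqrt3),\QQ(\sqrt{21})$), are settled by inspection, so $\Rd{0}\supsetneq K$; and $\Rd{0}\subseteq\Rd{1},\Rd{2},\Rd{}$ forces these too to be proper over $K$.

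For (v), let $\tau_i\in\Gal{\Rd{}}{K}$ be the Artin symbol of the infinite place $\ipl{i}$, so that $\Gal{\Rd{}}{\Rd{1}}=\langle\tau_2\rangle$, $\Gal{\Rd{}}{\Rd{2}}=\langle\tau_1\rangle$ and $\Gal{\Rd{}}{\Rd{0}}=\langle\tau_1,\tau_2\rangle$. The equality $\Rd{0}=\Rd{1}\cap\Rd{2}$ is then immediate, since $\langle\tau_1\rangle$ and $\langle\tau_2\rangle$ generate $\langle\tau_1,\tau_2\rangle$ (equivalently, the conductor of $\Rd{1}\cap\Rd{2}$ divides $\gcd(d'\ipl{1},d'\ipl{2})=d'\zk$, whose infinite part is trivial, forcing $\Rd{1}\cap\Rd{2}\subseteq\Rd{0}$). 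For $\Rd{}=\Rd{1}\Rd{2}$ one must show $\langle\tau_1\rangle\cap\langle\tau_2\rangle=1$; each $\tau_i$ has order $\le2$, and both are nontrivial because $\Rd{}\supseteq K(\muu{d'})$ is totally imaginary while $\Rd{0}$ is totally real, so $\Rd{}\ne\Rd{0}$; hence this is the assertion $\tau_1\ne\tau_2$. Comparing the Artin symbols of the everywhere-$(-1)$ principal idèle (trivial) and of the idèle that is $-1$ at $\ipl{1},\ipl{2}$ and $1$ at all finite places (equal to $\tau_1\tau_2$) shows $\tau_1\tau_2$ is the Artin symbol of the idèle that is $-1$ at the primes dividing $d'$ and $1$ elsewhere; this is trivial precisely when $\zk$ contains a totally positive unit $\equiv-1\pmod{d'\zk}$. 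Since $u_D$ has norm $1$ and is positive at $\ipl{1}$, it is totally positive, and the totally positive units of $\zk$ are exactly the powers of $u_D$; therefore $\Rd{}=\Rd{1}\Rd{2}$ is equivalent to the statement that no power of $u_D$ is congruent to $-1$ modulo $d'\zk$.

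This last, purely arithmetic, assertion is where the defining relation $d=d_r=1+u_D^r+u_D^{-r}$ is essential, and I expect it to be the main obstacle. The plan is to observe that this relation gives $u_D^r+u_D^{-r}\equiv-1\pmod{d\zk}$, so the image of $u_D^r$ in $\zk/d\zk$ is a primitive cube root of unity; a short computation then shows that $u_D^m\equiv-1\pmod{d'\zk}$ would entail $d_m\equiv2$ modulo $d'$, which is excluded by the Chebyshev recursions of \S\ref{pell}: from \eqref{twirl} one has $d_{nr}\equiv3$ or $0$ modulo $d=d_r$ according as $3\mid n$ or not, never $2$ since $d\ge4$, and a reduction along $\gcd(m,r)$ using \eqref{redux} handles the $m$ not divisible by $r$; the case of even $d$ (where $d'=2d$) needs the same analysis modulo $2d$. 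Assembling the cyclotomic inclusions, the $\sD$-symmetry, and this congruence analysis then yields the proposition.
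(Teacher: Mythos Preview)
Your arguments for (i)--(iv) and for the intersection half of~(v) are correct and close to the paper's, the main stylistic difference being that you obtain~(i) by pushing the cyclotomic ray class fields of~$\QQ$ up to~$K$, whereas the paper computes directly that the Artin symbols of principal~$\alpha\modx{1}{\mdp}$ fix~$\z{d'}$ via~$\normN{K}{\QQ}{\alpha}\equiv1\pmod{d'}$. Your explicit treatment of~(ii) (the degree count plus the three exceptional dimensions~$4,5,6$) is a little more detailed than the paper's appeal to ``basic cyclotomic theory'', but amounts to the same thing.

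Where you diverge from the paper is on the compositum half of~(v), and here you are in fact more scrupulous than the paper's own proof. The paper asserts that~$\Rd{}=\Rd{1}\Rd{2}$ ``follows from a consideration of the moduli under the map~$\lsD$'', but as you correctly observe this is not automatic: in Galois-theoretic terms it is the assertion~$\langle\tau_1\rangle\cap\langle\tau_2\rangle=1$, equivalently~$\tau_1\neq\tau_2$, equivalently (as you derive) that no totally positive unit of~$\zk$ is~$\equiv-1\pmod{d'\zk}$. The paper actually establishes this separately, as Proposition~\ref{nonG}: there the equivalent formulation~$U_{\mdp}^1=U_{d'}^1$ (no unit~$\equiv1\pmod{d'}$ is totally negative) is proved, and your condition is obtained from it by multiplying by~$-1$.

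Your sketch of that unit argument, however, needs repair. First, an arithmetic slip: if~$u_D^m\equiv-1\pmod{d'\zk}$ then also~$u_D^{-m}\equiv-1$, so~$d_m=1+u_D^m+u_D^{-m}\equiv-1\pmod{d'}$, not~$\equiv2$. With the correct target~$d_m\equiv-1$, the congruences~\textbf{C0}--\textbf{C2} show that for~$m=nr$ one has~$d_{nr}\equiv3$ or~$0\pmod{d_r}$, so~$d_m\equiv-1\pmod{d_r}$ forces~$d_r=4$ (and~$3\mid n$), a case which must then be finished by the explicit check that~$u_D=\tfrac{3+\sqrt5}{2}$ has order~$3$ in~$(\zk/4\zk)^\times$. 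Second, your ``reduction along~$\gcd(m,r)$ using~\eqref{redux}'' is too vague to cover the remaining cases: the paper's proof of Proposition~\ref{nonG} instead splits on whether~$3\mid j$ (writing~$d=d_j$), reduces the case~$3\nmid j$ to~$d_1$ as just described, and for~$3\mid j$ reduces to~$j=3^t$ and invokes a separate Lemma~\ref{uDorder} computing the exact multiplicative order of~$u_D$ modulo~$d_r'$ to obtain a parity contradiction. So you have identified the right obstacle and the right tools, but the argument requires this full case analysis and the order lemma rather than a single pass through the Chebyshev recursion.
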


\begin{proof}
$\Rd{}$ is the fixed field of the Artin symbols~\disp{\left((\alpha),\Rd{}/K\right)} attached to the integral principal ideals~$(\alpha)$ where~$\alpha\modx{1}{\mdp}$. 
The extension~\blue{\disp{K(\zeta_{d'})/K}} is unramified outside primes dividing~$\mdp$, so to show that~\blue{\disp{\zeta_{d'}\in\Rd{}}} it suffices to show that these Artin symbols fix \blue{it}. 
This action translates~\cite[X \S1]{lang} via the restriction map from~\blue{\disp{K(\zeta_{d'})/K}} down to~\blue{\disp{\QQ(\zeta_{d'})/\QQ}}, to raising~$\z{d'}$ to the power~\disp{\normN{K}{\QQ}{\alpha}}. 
But~$\mdp$ is fixed by~$\sD$ and so~(i) follows from:
\[
\alpha\modx{1}{\mdp} \implies \alpha^\sD\modx{1}{\mdp}\implies
\alpha\alpha^\sD\modx{1}{\mdp} \implies \normN{K}{\QQ}{\alpha}\equiv1\ (d'). 
\]
As a direct consequence,~\disp{\z{d'}+\z{d'}^{-1}\in\Rd{0}}. 
It follows that to prove~(ii) we  need only show that~\disp{\z{d'}+\z{d'}^{-1}\not\in \qr{\qD}}, which follows from the definitions of~$D,d'$ and basic cyclotomic theory. 

Now lift~$\sD$ to some~\disp{\lsD\in\Gal{\overline{\QQ}}{\QQ}}.
In terms of moduli,~$\lsD$ fixes~$d'$ and~$\mdp$ and interchanges~$d'\ipl{1}$ and~$d'\ipl{2}$. 
The field~\disp{\lsD\left(\Rd{}\right)} sits inside a normal closure of~\disp{\Rd{}/\QQ}: it is an abelian extension of~$K$ with conductor~$\mdp$ and with Galois group~\disp{\lsD\Gal{\Rd{}}{K}\lsD^{-1}} of order~\disp{[\Rd{} : K]}.
So~\disp{\lsD\left(\Rd{}\right)=\Rd{}} and~\disp{\Rd{}/\QQ} is Galois. 
Similarly for~$\Rd{0}$. 
Finally~(iv),~(v) follow from a consideration of the moduli under the map~$\lsD$. 
\end{proof}

Interestingly,~$\Rd{0}$ is abelian over~$\QQ$ in dimensions~$d=4,5,7,8$. 
However this is never true of~$\Rd{}$. 
Moreover, we always have $\Gal{\Rd{}}{\Rd{0}} \simeq \ZZ/2\ZZ\times\ZZ/2\ZZ$, as we now show. 

\begin{proposition}\label{nonG}
The groups $\Gal{\Rd{}}{\Rd{1}}$, $\Gal{\Rd{1}}{\Rd{0}}$, $\Gal{\Rd{}}{\Rd{2}}$, $\Gal{\Rd{2}}{\Rd{0}}$ all have order~2. 
\end{proposition}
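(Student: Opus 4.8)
The plan is to turn the statement into class field theory and then into an explicit fact about units of~$K$. Let $E_{\mathfrak m}:=\{u\in\zk^\times : u\modx{1}{\mathfrak m}\}$, put $E:=E_{d'}$, and let $\operatorname{sgn}_1,\operatorname{sgn}_2$ denote the signs at the real places $\ipl{1},\ipl{2}$. Applying the standard formula $|\mathrm{Cl}_{\mathfrak m}(K)|=h_K\,|(\zk/\mathfrak m_0)^\times|\,2^{t}/[\zk^\times:E_{\mathfrak m}]$ (with $\mathfrak m_0$ the finite part and $t$ the number of real places dividing $\mathfrak m$) to the four moduli $d'$, $d'\ipl{1}$, $d'\ipl{2}$, $\mdp$, whose finite parts all equal $d'$, the class numbers cancel and one obtains $[\Rd{1}:\Rd{0}]=2/|\operatorname{sgn}_1(E)|$ and $[\Rd{2}:\Rd{0}]=2/|\operatorname{sgn}_2(E)|$. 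Now $\zk^\times=\langle-1,\uf\rangle$ with $\uf$ positive at $\ipl{1}$, one has $\operatorname{sgn}_2(\uf)=\normN{K}{\QQ}{\uf}$ (because $\ipl{2}=\ipl{1}\circ\sD$ and $\uf\,\uf^{\sD}=\normN{K}{\QQ}{\uf}$), and any $u\equiv1\bmod d'\zk$ satisfies $\normN{K}{\QQ}{u}\equiv1\bmod d'$ hence $=+1$ (as $d'>2$; this is the argument already used for Proposition~\ref{rcfbasics}(i)). A short check then shows $\operatorname{sgn}_1(E)$ and $\operatorname{sgn}_2(E)$ are both trivial \emph{provided} $-1$ is not congruent mod $d'\zk$ to a power of $\uf$, i.e.\ $-1\notin\langle\overline{\uf}\rangle\subset(\zk/d'\zk)^\times$; and since the reduction $(\zk/d'\zk)^\times\to(\zk/d\zk)^\times$ carries $-1$ to $-1\ne1$, it is enough to prove this last statement modulo~$d$.

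This is the crux, and the step I expect to be the main obstacle; I would prove it by an archimedean size estimate fed by Lemma~\ref{pellemma}. Since $d=d_r$, the norm-one unit $\beta:=u_D^{r}$ has $\Tr(\beta)=\beta+\beta^{-1}=d-1$ by~\eqref{dr}, so $\overline{\beta}^2+\overline{\beta}+1\equiv0\bmod d\zk$ and $\overline{\beta}$ has order exactly $3$ (not $1$, since $d\ge4$). Suppose for contradiction $-1\in\langle\overline{\uf}\rangle$, and let $n$ be the order of $\overline{\uf}$ in $(\zk/d\zk)^\times$. Then $6\mid n$; moreover if $\normN{K}{\QQ}{\uf}=-1$ then $\overline{\uf}^{\,n/2}=-1$, so reducing norms gives $(-1)^{n/2}\equiv1\bmod d$, forcing $4\mid n$, so that in every case $n/6$ is even. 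With $u_D=\uf^{\,e}$ ($e\in\{1,2\}$) we have $\overline{\beta}=\overline{\uf}^{\,er}$, and because $\overline{\uf}^{\,n/3}$ generates the unique order-$3$ subgroup, which contains $\overline{\beta}$, one gets $\overline{\uf}^{\,n/6}=\overline{\uf}^{\,n/2-n/3}=-\overline{\beta}^{\,\pm1}$, an element satisfying $x^2-x+1\equiv0\bmod d\zk$. Hence the rational integer $\Tr(\uf^{\,n/6})=\uf^{\,n/6}+\uf^{-n/6}$ (the evenness of $n/6$ handles $\normN{K}{\QQ}{\uf}=-1$) reduces mod $d$ to $\overline{\uf}^{\,n/6}+\overline{\uf}^{-n/6}=1$, so $\Tr(\uf^{\,n/6})\equiv1\bmod d$; being also $>2$, it is $\ge d+1$, whence $\uf^{\,n/6}>d$. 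On the other hand $\Tr(\beta)=d-1$ gives $\uf^{\,er}<d-1<d<\uf^{\,n/6}$, so $er<n/6$. But $\overline{\uf}^{\,er}=\overline{\beta}$ has order $3$ in $\ZZ/n\ZZ$, hence $(n/3)\mid er$ and $er\ge n/3>n/6$ — a contradiction. Therefore $-1\notin\langle\overline{\uf}\rangle$ modulo $d$, hence modulo $d'$.

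It then remains to collect the consequences. The previous step makes $\operatorname{sgn}_1$ trivial on $E$, so $\Gal{\Rd{1}}{\Rd{0}}$ has order $[\Rd{1}:\Rd{0}]=2$; applying a lift $\lsD$ of $\sD$, which fixes $d'$ and interchanges $\ipl{1}$ and $\ipl{2}$ (cf.\ Proposition~\ref{rcfbasics}), sends $\Rd{1}$ to $\Rd{2}$ over $\Rd{0}$, so $[\Rd{2}:\Rd{0}]=2$ as well. By Proposition~\ref{rcfbasics}(v), $\Rd{}=\Rd{1}\Rd{2}$ and $\Rd{0}=\Rd{1}\cap\Rd{2}$, with $\Rd{1}/\Rd{0}$ and $\Rd{2}/\Rd{0}$ Galois (being subextensions of the abelian $\Rd{}/K$); hence $[\Rd{}:\Rd{0}]=[\Rd{1}:\Rd{0}]\,[\Rd{2}:\Rd{0}]=4$ and so $[\Rd{}:\Rd{1}]=[\Rd{}:\Rd{2}]=2$. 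All four groups are thus of order $2$. The only genuinely non-formal ingredient is the inequality chain $\uf^{\,er}<d<\uf^{\,n/6}$ played against $(n/3)\mid er$; everything else is bookkeeping with the ray class number formula and Proposition~\ref{rcfbasics}.
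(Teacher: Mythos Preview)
Your proof is correct, and the core step is genuinely different from the paper's.

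Both arguments make the same class-field-theoretic reduction: the ray class number formula turns the four degree-$2$ claims into the single assertion that every unit in $E=E_{d'}$ is totally positive, equivalently that no power of $\uf$ is $\equiv -1\bmod d'$ (or, after your further reduction, $\bmod\, d$). Where the two proofs diverge is in establishing this.

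The paper argues combinatorially. Writing $d=d_j$ and assuming $u_D^r\equiv -1\bmod d'_j$, it splits on whether $3\mid j$ and uses the shifted-Chebyshev congruences \textbf{C0}--\textbf{C2} from \S\ref{pell} to force restrictive divisibilities among the $d_k$; the $3\mid j$ branch is then finished off with the auxiliary Lemma~\ref{uDorder}, which pins down the exact multiplicative order of $u_D$ modulo $d'_r$. Your argument is instead an archimedean size estimate: from $\overline{\uf}^{\,n/2}=-1$ and $\overline{\beta}=\overline{\uf}^{\,\pm n/3}$ you extract that $\overline{\uf}^{\,n/6}$ is a primitive sixth root of unity mod $d$, hence the rational integer $\uf^{\,n/6}+\uf^{-n/6}$ is $\equiv 1\bmod d$ and $>2$, forcing $\uf^{\,n/6}>d$; but $\uf^{\,er}=u_D^{\,r}<d-1$ while $(n/3)\mid er$ gives $er\ge n/3>n/6$, a contradiction.

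Your route is more self-contained: it avoids the Chebyshev recursion, the $3\mid j$ case split, and the separate technical Lemma~\ref{uDorder}. The paper's route, on the other hand, reuses machinery already in place for Proposition~\ref{towers} and yields the exact order of $u_D$ as a byproduct. One small wording point: the clause ``so that in every case $n/6$ is even'' overstates slightly---you have only shown (and only need) $n/6$ even when $N(\uf)=-1$; your later parenthetical makes clear you understand this, so it is a phrasing issue rather than a gap.
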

\noindent We shall prove this after pointing out some consequences for the structure of the \SIC\ fields. 
Write~$\g_j$ for the unique non-trivial element of~\disp{\Gal{\Rd{j}}{\Rd{0}}} for~$j=1,2$: that is,~\disp{\g_j\in\Gal{\Rd{j}}{K}} is complex conjugation under every complex embedding of~\disp{\Rd{j}}. 
Let~\disp{\Gamma_{12}} be the subgroup of~$\Gal{\Rd{}}{K}$ of order~2 generated by the product~$\g_1\g_2$, and recall the notation~$M^G$ for the elements of a~$G$-module~$M$ fixed by the action of~$G$. 
By Proposition~\ref{rcfbasics}, any lifting~$\lsD$ of~$\sD$ swaps~$\g_1$ and~$\g_2$. 

\begin{corollary} \label{cor:rayclassprops}
\emph{(i)} $\Rd{}$ is non-abelian over~$\QQ$.

\emph{(ii)} \disp{\Rd{1},\Rd{2}} are non-Galois over~$\QQ$, having~\disp{[\Rd{1}:K]} real places and~\disp{\frac{1}{2}[\Rd{1}:K]} pairs of complex places.  

\emph{(iii)} \disp{\Rd{}=\Rd{1}(\z{k})=\Rd{2}(\z{k})} for any~$k>2$ which is a divisor of~$d'$. 

\emph{(iv)} With~$k$ as in~\emph{(iii)}, \disp{\Rd{0}(\z{k})=\Rd{}^{\Gamma_{12}}} is a CM-field. \disp{\Rd{1},\Rd{2}} and~\disp{\Rd{0}(\z{k})} are the three quadratic fields between~\disp{\Rd{0}} and~\disp{\Rd{}}. 
\end{corollary}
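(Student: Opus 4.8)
The plan is to reduce the four assertions to two inputs: the ramification of the infinite places $\ipl{1},\ipl{2}$, and the structure $\Gal{\Rd{}}{\Rd{0}}\simeq\ZZ/2\ZZ\times\ZZ/2\ZZ$ supplied by Proposition~\ref{nonG}. First I would determine the archimedean signatures. As $\Rd{0}$ has modulus $d'$ with no infinite part, neither real place of $K$ ramifies and $\Rd{0}$ is totally real; as both $\ipl{1},\ipl{2}$ divide the modulus of $\Rd{}$, both ramify and $\Rd{}$ is totally imaginary. For $\Rd{1}$, with modulus $d'\ipl{1}$, I would check that $\ipl{1}$ genuinely ramifies: if $\ipl{1}$ were unramified in $\Rd{1}$, the conductor of $\Rd{1}$ would divide $d'$ and hence $\Rd{1}\subseteq\Rd{0}$, contradicting $[\Rd{1}:\Rd{0}]=2$ from Proposition~\ref{nonG}; meanwhile $\ipl{2}$ is absent from the modulus, hence unramified. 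Tracking places through $K\subset\Rd{0}\subset\Rd{1}$ then shows that the $[\Rd{0}:K]=\tfrac12[\Rd{1}:K]$ real places of $\Rd{0}$ over $\ipl{2}$ each split into two real places of $\Rd{1}$, while the equally many real places over $\ipl{1}$ each become a single complex place; this is exactly the count in~(ii), and $\Rd{2}$ is symmetric.

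The non-Galois claim in~(ii) then follows since $\Gal{L}{\QQ}$ acts transitively on the archimedean places of any $L$ Galois over $\QQ$, forcing them all to be real or all complex; $\Rd{1}$ has both, so it is not Galois over $\QQ$. For~(i), I would restrict a lift $\lsD$ of $\sD$ to an element of $\Gal{\Rd{}}{\QQ}$, which is legitimate since $\Rd{}/\QQ$ is Galois (Proposition~\ref{rcfbasics}(iii)); as recorded just before the corollary, $\lsD$ swaps $\g_1$ and $\g_2$. Because $\g_1\neq\g_2$ in $\Gal{\Rd{}}{\Rd{0}}$, conjugation by $\lsD$ acts non-trivially on $\Gal{\Rd{}}{K}$, so $\Gal{\Rd{}}{\QQ}$ is non-abelian.

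For~(iii), since $k\mid d'$ we have $\z{k}\in\muu{d'}\subset\Rd{}$, so $\Rd{1}\subseteq\Rd{1}(\z{k})\subseteq\Rd{}$; as $[\Rd{}:\Rd{1}]=2$, equality holds precisely when $\z{k}\notin\Rd{1}$. But $\Rd{1}$ possesses real places (those over $\ipl{2}$), and the corresponding real embeddings would have to send the order-$k$ root of unity $\z{k}$, with $k>2$, to a non-real value---impossible. Hence $\z{k}\notin\Rd{1}$ and $\Rd{}=\Rd{1}(\z{k})$, and the same argument gives $\Rd{}=\Rd{2}(\z{k})$.

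For~(iv), the isomorphism $\Gal{\Rd{}}{\Rd{0}}\simeq(\ZZ/2\ZZ)^2$ yields exactly three quadratic fields between $\Rd{0}$ and $\Rd{}$; two are $\Rd{1}=\Rd{}^{\langle\g_2\rangle}$ and $\Rd{2}=\Rd{}^{\langle\g_1\rangle}$, the third being $\Rd{}^{\Gamma_{12}}$ with $\Gamma_{12}=\langle\g_1\g_2\rangle$. I would identify $\Rd{0}(\z{k})$ with this third field: first $[\Rd{0}(\z{k}):\Rd{0}]=2$, because $\z{k}+\z{k}^{-1}$, being real and lying in $\QQ(\z{d'})$, sits in the maximal real subfield $\QQ(\z{d'}+\z{d'}^{-1})\subseteq\Rd{0}$ by Proposition~\ref{rcfbasics}(i), while $\z{k}\notin\Rd{0}$ as $\Rd{0}$ is totally real; and $\Rd{0}(\z{k})$ is totally imaginary, since every embedding carries $\z{k}$ (of order $k>2$) to a non-real root of unity. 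It therefore cannot be either of the mixed-signature fields $\Rd{1},\Rd{2}$ and must be $\Rd{}^{\Gamma_{12}}$; being a totally imaginary quadratic extension of the totally real $\Rd{0}$, it is a CM-field. The main obstacle I anticipate is the archimedean bookkeeping of the first paragraph---in particular the rigorous justification that $\ipl{1}$ \emph{must} ramify in $\Rd{1}$ and that the place counts are exact---since the real-versus-complex dichotomy among $\Rd{0},\Rd{1},\Rd{2},\Rd{}$ is precisely what drives all four parts.
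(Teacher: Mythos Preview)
Your proof is correct and follows essentially the same route as the paper's. The paper's argument is terser---for instance, it deduces~(iv) in a single clause from $\z{d'}+\z{d'}^{-1}\in\Rd{0}$---but the underlying ideas (conjugation by $\lsD$ is nontrivial for~(i), mixed archimedean signature for~(ii), $\Rd{1}$ not totally complex for~(iii), and identifying $\Rd{0}(\z{k})$ as the third quadratic subfield for~(iv)) are identical. One small imprecision: in your first paragraph you assert that both $\ipl{1},\ipl{2}$ ramify in $\Rd{}$ simply because they divide the modulus, which does not follow directly; however, this claim is not load-bearing in your argument, and in any case the same conductor reasoning you give for $\ipl{1}$ in $\Rd{1}$ (or the presence of $\z{d'}\in\Rd{}$) would justify it.
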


\begin{figure}
\begin{center}
\begin{tikzcd}
& \Rd{} =  \Rd{1} \Rd{2}
\\ 
 \Rd{1}  \arrow[ur, dash,very thick, black,"\ipl{2}"]	&  \Rd{0}(\zeta_{d'}) \arrow[u, dash,very thick, black] 	&	   \Rd{2}  \arrow[ul, dash,very thick, black, swap, "\ipl{1}"] 	
\\
&	\Rd{0} \arrow[ul,dash,very thick,black,"\ipl{1}"] \arrow[u,dash,very thick,black,""]  \arrow[ur,dash,very thick,black,"\ipl{2}"'] 	
\\
\\
&	H_K \arrow[uu,dash,very thick,black,"d'"] 	
\\
 & K  
\arrow[u, dash, very thick, black,"1"] 
= \QQ(\sqrt{D})	
\\
\QQ \arrow[ur, dash,very thick,black] 		
\\
\end{tikzcd}
\end{center}
\vspace{-10mm}
\vbox{
\tiny{
\blue{\caption{\small{Tower of ray class field extensions above~$K$, for a fixed finite modulus~$d'$ and the four possible combinations of infinite places.  
The moduli are `cumulative' going up the tower from~$K$, so for example~$\Rd{1}$ has modulus~$d'\ipl1$. 
The Hilbert class field~$H_K$ of~$K$ is always the ray class field of modulus~$1$.  
The CM field~$\Rd{0}(\zeta_{d'})$ is not strictly a ray class field itself, in general.  
}}}}
\label{Figfieldtower}}
\end{figure}

\begin{proof}(of Corollary). 
If~\disp{\Rd{}/\QQ} were abelian then the inner automorphism~$\lsD$ would be trivial. 
But~\disp{\Rd{}} is the compositum of~\disp{\Rd{1}} and~\disp{\Rd{2}={\Rd{1}}^{\lsD}}, so~\disp{\Rd{}=\Rd{1}=\Rd{2}}, contradicting Proposition~\ref{nonG}. 
This proves~(i).
For~(ii):~\disp{\Rd{1}} is Galois over~$K$ and~\disp{\Rd{1}/\Rd{0}} is non-trivial; so it has~\disp{\frac{1}{2}[\Rd{1}:K]} pairs of complex places over~$\ipl{1}$.  
The corresponding~\disp{[\Rd{1}:K]} places over~$\ipl{2}$ are real. 
This argument is symmetric in~\disp{\Rd{1},\Rd{2}} by Proposition~\ref{rcfbasics}. 
For~(iii) just combine the facts that~\disp{\Rd{1}} is not totally complex,~\disp{\Rd{}/\Rd{1}} has degree~2, and~\blue{\disp{K(\zeta_{d'})\subseteq\Rd{}}}. 
Since~\disp{\z{d'}+\z{d'}^{-1}\in\Rd{0}} this also implies~(iv) . 
\end{proof}

\begin{proof}(of Proposition~\ref{nonG}). 
Let~$h_K$ denote the class number of~$\zk$. 
If~$\mm$ is any modulus of~$K$ we denote by~$U_{\mm}^1$ the subgroup of~$U_K = \zk^\times$ consisting of units~$\modx{1}{\mm}$. 
This has finite index \hbox{$[U_K : U_{\mm}^1]$} in~$U_K$. 
Express~$\mm = \mm_0 \mm_\infty$ as a product of its finite and infinite parts respectively.
We denote by~$\Phi$ the generalized Euler totient function, so~$\Phi(\mm_0)$ is the order of the multiplicative group of the ring~$\zk/\mm_0$. 
By analogy we write~$\Phi(\mm_\infty)$ for the size of the signature group~$\{\pm1\}^{r_{\mm}}$ where~$r_{\mm}$ is the number of real places included in~$\mm_\infty$. 
The formula~\cite[VI \S1]{lang} for the order~$h_{\mm}$ of the ray class group of~$K$ of modulus~$\mm$ is: 
\begin{equation}\label{ordrcg}
h_{\mm} = \frac{h_K \Phi(\mm_0) \Phi(\mm_\infty)}{[U_K : U_{\mm}^1]}\ .
\end{equation}
Let~$\uf$ and~$u_D$ be defined as in section~\ref{pell}. 
Since~$K$ is a real quadratic field,
\[
U_{\mdp}^1 \subseteq U_{d'\ipl{1}}^1 \subseteq U_{d'}^1 \subseteq \zk^\times = \uf^{\ZZ}\times\{\pm1\} 
\]
with each inclusion being of finite index; and similarly with~$U_{d'\ipl{2}}^1$ replacing~$U_{d'\ipl{1}}^1$. 
So for our purposes it suffices to show that~$U_{\mdp}^1 = U_{d'\ipl{1}}^1 = U_{d'\ipl{2}}^1 = U_{d'}^1,$~or equivalently: 
\begin{equation}\label{unitreq}
U_{\mdp}^1 = U_{d'}^1,
\end{equation}
for then the signature factor~$\Phi(\mm_\infty)$ in~(\ref{ordrcg}) will tell the whole story of the growth in the size of the ray class groups as we successively add real places into the modulus. 

Now~(\ref{unitreq}) says that every unit in~$U_{d'}^1$ is totally positive.  
Since~$d'\in\ZZ$,~$U_{d'}^1$ is closed under the action of~$\sD$. 
Hence~$\normN{K}{\QQ}{U_{d'}^1}\subseteq U_{d'}^1$. 
But~$-1\notin U_{d'}^1$ because~$d'>2$. 
So~$U_{d'}^1$ is a rank~1 torsion-free abelian group, all of whose elements have absolute norm~$+1$. 
In particular their signature must either be~$(+,+)$ or~$(-,-)$, so we are reduced to showing that~\emph{no unit in~$U_{d'}^1$ can be totally negative}.  

By construction~$u_D$ is the first totally positive power of~$\uf$, so any totally negative unit in~$U_{d'}^1$ must be of the form~$-u_D^r$ for some~$r\in\ZZ$. 
Suppose that such a unit exists and that~$r\geq1$ is minimal. 
Let $j$ be the positive integer such that $d=d_j$, in the sense of~(\ref{dr}).  
Since both~$u_D^r$ and~$u_D^{-r}\equiv-1\bmod d'_j$ it follows that
\begin{equation}\label{drdj}
d_r = u^r_D + u^{-r}_D +1 \equiv -1 \bmod d'_j.
\end{equation}

There are two cases. 
If~$3 \nmid  j$, then the congruences~\textbf{C1},~\textbf{C2} imply~$d_1 \mid d'_j$ and consequently \hbox{$u_D^r\equiv-1\bmod d_1$}.  
Moreover~(\ref{drdj}) shows~$d_r \equiv -1 \bmod d_1$, so a second application of~\textbf{C0}, \textbf{C1}, \textbf{C2} forces~$d_1=4$ and~$3 \mid r$.  
We therefore have~$D=5$ and~$u_D=\uf^2=\frac{3+\sqrt{5}}{2}$, which has order~3 inside~$\zk/(4)$.  Since $3|r$ this means $u_D^r \equiv 1 \bmod d_1$, a contradiction. 

On the other hand, if~$3 \mid j$ then write~$j=3^t q$ for~$t,q\geq1$ and~$3\nmid q$.  
It follows from~\textbf{C1}, \textbf{C2} that $ d_{3^t} \mid d'_j $, implying $u^r_D\equiv-1 \bmod d_{3^t}$.   
So if such a totally negative unit exists for~$j=3^t q$ then it must also exist for~$j=3^t$.  
Redefine~$r$ to be minimal for~\emph{this} relation: then the order of~$u_D$ modulo~$d_{3^t}$ is~$2r$, which is even. 
But~$d_{3c} = T^{*}_3(d_c) = d^2_c(d_c-3)+3$ for any integer~$c$ and so~$d_{3^t}$ is odd, which by Lemma~\ref{uDorder} below implies that the order of~$u_D$ modulo~$d_{3^t}$ is also odd, a contradiction. 
\end{proof}

\noindent Finally we prove the technical lemma used in the last proof. 

\begin{lemma}\label{uDorder}
The order of~$u_D$ modulo~$d_r'$ is~$3r\frac{d_r'}{d_r}$ (i.e.\ it is~$3r$ if~$d_r$ is odd and~$6r$ if~$d_r$ is even). 
\end{lemma}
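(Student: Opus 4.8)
The plan is to reduce the whole statement to a single explicit identity in $\zk$ together with a short divisibility analysis. Writing $v = u_D^{\,r}$, the definition $d_r = 1 + v + v^{-1}$ combined with $v\,v^{-1} = 1$ shows that $v$ satisfies the monic quadratic $v^2 - (d_r-1)v + 1 = 0$, and a short computation with this relation yields
\[
u_D^{\,3r} \;=\; 1 + d_r\bigl((d_r-2)\,u_D^{\,r} - 1\bigr).
\]
In particular $u_D^{\,3r} \equiv 1 \bmod d_r\zk$, so if $m$ denotes the order of $u_D$ modulo $d_r$ then $m \mid 3r$.

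Next I would show that $m = 3r$ exactly. If $3 \nmid m$ then $m \mid r$, so $u_D^{\,r} \equiv 1 \bmod d_r$; but then $d_r = 1 + u_D^{\,r} + u_D^{-r} \equiv 3 \bmod d_r$, forcing $d_r \mid 3$, impossible since $d_r \ge d_1 \ge 4$. If instead $3 \mid m$ but $m < 3r$, write $m = 3e$ with $e \mid r$ and $e < r$; then $u_D^{\,e}$ has order dividing $3$ modulo $d_r$, and it cannot have order $1$ (else $m = 3e \mid e$, absurd for $e\ge 1$), so it is a primitive cube root of unity mod $d_r$, whence $d_e = 1 + u_D^{\,e} + u_D^{-e} \equiv 0 \bmod d_r$. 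This contradicts $0 < d_e < d_r$, the strict inequality coming from the strict monotonicity of $(d_n)$ established earlier. Hence $m = 3r$, which settles the case $d_r$ odd, where $d_r' = d_r$.

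Finally, for $d_r$ even I would use $d_r' = 2d_r$ and let $m'$ be the order of $u_D$ modulo $2d_r$. Reduction mod $2d_r$ refines reduction mod $d_r$, so $m'$ is a multiple of $m = 3r$; squaring the displayed identity and noting $2d_r \mid d_r^2$ (since $d_r$ is even) gives $u_D^{\,6r} \equiv 1 \bmod 2d_r$, so $m' \mid 6r$. It then remains only to exclude $m' = 3r$: since $d_r$ is even, $(d_r-2)u_D^{\,r} - 1 \equiv -1 \bmod 2$, so the displayed identity gives $u_D^{\,3r} \equiv 1 - d_r \not\equiv 1 \bmod 2d_r$ (using $0 < d_r < 2d_r$). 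Therefore $m' = 6r$, as claimed.

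I expect the one genuinely delicate point to be this last parity computation in the even case — it is precisely what produces the factor $d_r'/d_r$ in the statement — while the rest of the argument is formal once the identity for $u_D^{\,3r}$ is in hand; the only external inputs are $d_r \ge 4$ and the strict monotonicity of $(d_n)$.
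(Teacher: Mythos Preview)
Your overall architecture is sound and close to the paper's, and the explicit identity
\[
u_D^{\,3r} \;=\; 1 + d_r\bigl((d_r-2)\,u_D^{\,r} - 1\bigr)
\]
is correct and does the heavy lifting in the parts where you use it. The case $3\nmid m$ is fine, and your treatment of the even case (squaring the identity, then the parity computation showing $u_D^{3r}\equiv 1-d_r\bmod 2d_r$) is clean and essentially identical to the paper's.

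The genuine gap is in the step ``$u_D^{\,e}$ is a primitive cube root of unity mod $d_r$, whence $d_e = 1 + u_D^{\,e} + u_D^{-e}\equiv 0\bmod d_r$''. In a ring with zero divisors the implication $x^3\equiv 1,\ x\not\equiv 1\ \Rightarrow\ 1+x+x^2\equiv 0$ can fail (e.g.\ $x=4$ in $\ZZ/9\ZZ$), and $\zk/d_r\zk$ is not a domain in general. Concretely, from your own identity with $r$ replaced by $e$ one only obtains $d_e\bigl((d_e-2)u_D^{\,e}-1\bigr)\equiv 0\bmod d_r$, and the second factor has norm $3-d_e$, which need not be coprime to $d_r$; so you cannot cancel it to conclude $d_r\mid d_e$. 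Equivalently, $(u_D^{\,e}-1)(1+u_D^{\,e}+u_D^{2e})\equiv 0$ does not let you drop $u_D^{\,e}-1$, since $N(u_D^{\,e}-1)=3-d_e$ may share a factor with $d_r$.

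The paper sidesteps this by arguing with $m$ itself rather than with $e=m/3$: plugging $X=1$ into the exact relation $X^2-(d_m-1)X+1=0$ satisfied by $u_D^{\,m}$ gives $d_r\mid d_m-3$, hence $m>r$ by monotonicity. Combined with $m\mid 3r$ this forces $m\in\{3r,\tfrac{3r}{2}\}$, and the case $m=\tfrac{3r}{2}$ (for $r$ even) is then excluded by an explicit computation $u_D^{\,3r/2}\equiv 1-d_{r/2}\bmod d_r$ entirely analogous to your even-case argument. You can repair your proof in the same way: replace the ``primitive cube root'' step by the observation that $u_D^{\,m}\equiv 1\bmod d_r$ forces $d_m\equiv 3\bmod d_r$ (just evaluate the minimal polynomial of $u_D^{\,m}$ at $1$), and then handle the lone surviving possibility $e=r/2$ directly.
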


\begin{proof}
Consider the absolute minimal polynomial for~$u_D^r$: 
\begin{equation}\label{minp}
X^2-(d_r-1)X+1=(X-u_D^r)(X-u_D^{-r}).
\end{equation} 
Multiplying by~$(X-1)$ shows that~$u_D^{3r}\equiv1\bmod d_r$ for all~$r$. 
When~$d_r$ is even, we know from~\textbf{C2} that~$d_{2r}$ is divisible by~$d_r'=2d_r$ and thus: 
\begin{equation*}
(X-u_D^{2r})(X-u_D^{-2r}) = X^2-(d_{2r}-1)X+1\equiv X^2+X+1 \bmod 2d_r.
\end{equation*}
Once again, multiplying by~$(X-1)$ yields~$u_D^{6r}\equiv1\bmod d_r'$, for any~$r$. 

It remains to show that the power~$3r\frac{d_r'}{d_r}$ is minimal in each case. 
We first show that~$3r$ is minimal modulo~$d_r$ for all~$r$. 
Let~$q\in\NN$ be minimal such that~$u_D^q\equiv1\bmod d_r$. 
It is easy to see that~$q\mid3r$. 
Now the minimal polynomial of~$u_D^q$ is~$X^2-(d_q-1)X+1$, so this must vanish modulo~$d_r$ at~$X=1$. 
In other words~$d_r$ divides into~$d_q-3$, proving that~$r<q$ since~$(d_j)_{j\geq1}$ is a strictly increasing sequence. 
So~$3\mid q$, since otherwise~$q\mid 3r\implies q\mid r\implies q\leq r$. 
Writing~$q=3q_0$ we see that~$q_0\mid r$ and so~$q_0\leq r<3q_0=q$. 
This forces~$q_0=r$ or~$q_0=r/2$ and we now show in fact~$q_0=r$. 
If~$r$ is odd we are done. 
If~$r$ is even it follows from equation~(\ref{minp}) that~\disp{u_D^{-\frac{r}{2}}(u_D^{2r} + u_D^r +1) \equiv 0 \bmod d_r}, which translates via equation~(\ref{dr}) to~\disp{u_D^\frac{3r}{2}\equiv 1-d_\frac{r}{2}\bmod d_r}. 
But~$d_\frac{r}{2}\not\equiv0\bmod d_r$, so again~$q_0=r$. 

Finally assume that~$d_r$ is even and let~$q'\in\NN$ be the minimal integer such that~$u_D^{q'}\equiv1\bmod d_r'=2d_r$. 
Again we must have~$q'\mid 6r$; moreover~$q'<3r$ would (by reduction modulo~$d_r$) contradict the proof above for~$d_r$. 
So either~$q'=3r$ or~$q'=6r$. 
Using~(\ref{dr}),~(\ref{minp}) as above~$u_D^{3r}\equiv1-d_r\bmod d_{2r}$, $d_r\not\equiv0\bmod 2d_r$ and the proof is complete again observing that~\hbox{$d_r$ even $\implies 2d_r\mid d_{2r}$}.
\end{proof}

\section{Linking unitary geometry and arithmetic via subgroups of \texorpdfstring{$\glzd$}{GL2}}\label{GandA}
Let~$\bfv$ be a fiducial vector taken from one of the orbits in the list (\ref{rcfdimz}) that is \emph{centred} as in \S\ref{sec:Weil}.
This section and the next are devoted to a study of the function $f_v \colon (\ZZ/d'\ZZ)^2 \to \Rd{}$ defined by
\begin{equation} \label{eq:fvjdf}
f_v(j) = \frac{\left( \bfv, \Delta_\bfj\bfv\right)}{(\bfv,\bfv)} = \Tr \Pi_{\bfv} \Delta_\bfj,
\end{equation}
where the~$\Delta_\bfj$ are defined as in \eqref{DisOpDef}.  
Unlike the components of the fiducial vector the values of the function $f_v$ are independent of both the scaling and the basis.  
It follows from Proposition~\ref{rcfobs} and (\ref{DisOpDef})  that the values $f_v(j)$ for $\bfj \nequiv \binom{0}{0} \bmod d$ are all elements of~$\Rd{}$ with modulus~$\invnf$.  In fact we have found empirically that it is possible to choose $\bfv$ so that the field~$\QQ(f_v)$ obtained by adjoining to~$\QQ$ all of the values of $f_v$ is equal to~$\Rd{1}$.  We say that a centred fiducial vector for which that is true is \emph{strongly centred}.  Such fiducials may be characterized as follows:
\begin{itemize}
\item If $3 \nmid d$  every centred fiducial is  strongly  centred.
\item If $3\mid d$ let $n = d/3$ and $B = \Delta_{n,2n}$.  Then for each centred fiducial $\bfv$ exactly one of the three vectors $\bfv, B\bfv, B^2 \bfv$ is  strongly centred.
\end{itemize} 
From now on it will be assumed without comment that fiducials are strongly centred.
\blue{
\begin{lemma} \label{lm:fvprm}
If $v$ is a strongly-centred fiducial and $g$ is any element of $\Gal{\mathfrak{R}}{K}$ then $g$ permutes the set $\{ f_v(j)\colon j \in (\ZZ/d'\ZZ)^2\}$.
\end{lemma}
\begin{proof}
Immediate consequence of Theorem~8 in ref.~\cite{AYZ}.
\end{proof}
}
Let~$L$ be the smallest subfield of~$\Rd{1}$ fixing $f_v((\ZZ/d'\ZZ)^2)$ setwise. 
In all of our examples, $L \supset K$ with index $[L:K]$ equal to the number of $\Sp_2(\ZZ/d'\ZZ)$-orbits of fiducials~\cite[\S7]{AYZ}; see the remark at the end of this section. 
In addition to the Galois action, $\glzd$ acts naturally on functions $f \colon (\ZZ/d'\ZZ)^2 \to \Rd{1}$ by translation via $(gf)(j) = f(gj)$.  
What follows is a tantalizing observation linking these two distinct actions,  strengthening a statement in \S7 of~\cite{AYZ}. 
Define
\[
\stabo = \{ (\det F) F \colon F \in \stabo_0 \},
\]
where $\stabo_0 \subset \ESp_2(\ZZ/d'\ZZ)$ is the stabilizer defined in Sec.~\ref{sec:Weil}. 
As shown in \cite{AYZ}, $\boldsymbol{S}$ is the stabilizer of $f_v$, i.e.\ $\stabo = \{g \in \espzd : g f_v = f_v\}$.

\begin{proposition}\label{CS}
For all orbits listed in~\emph{(\ref{rcfdimz})} $L$ is the Hilbert class field~$H_K$ of $K$, and
\begin{equation}\label{arith}
\MS/\,\stabo \ \ \simeq \ \Gal{\Rd{1}}{L},
\end{equation}
where~$\MS$ is a maximal abelian subgroup of~$\glzd$ containing~$\stabo$. For the orbits of $F_z$-type in~\emph{(\ref{rcfdimz})} $\MS$ is in fact $\CS$, the centralizer of~$\stabo$ inside~$\glzd$; for the orbits of $F_a$-type~$\MS$ is conjugate to one of three possible subgroups that are characterized in Ref.~\cite{MAIB}. 
\end{proposition}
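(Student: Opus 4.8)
The plan is to establish Proposition~\ref{CS} by combining the Galois-theoretic structure of $\Rd{1}$ over $K$ with a careful analysis of the two commuting actions of $\glzd$ on the function $f_v$: the geometric translation action $(gf)(j) = f(gj)$ and the arithmetic Galois action. The central idea is that $f_v$ encodes the fiducial projector, so the number field $\QQ(f_v) = \Rd{1}$ carries a faithful Galois action, and this action must be compatible with (indeed, essentially realized by) the translation action of an abelian subgroup of $\glzd$ that stabilizes $f_v$ only up to the Galois twist.

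First I would recall from \cite{AYZ} that $\stabo$ is precisely the translation-stabilizer of $f_v$ inside $\espzd$, so that the orbit of $f_v$ under any larger group has size equal to the index of $\stabo$. The key structural input is that the values $f_v(j)$ for $j \nequiv \binom{0}{0} \bmod d$ all have modulus $\invnf$ and lie in $\Rd{1}$, and that $L$ is the subfield of $\Rd{1}$ fixing the value-\emph{set} $f_v((\bZ/d'\bZ)^2)$ setwise. The claim $L = $ Hilbert class field of $K$ should follow from the displayed index computation $[L:K] = \#\{\Sp_2(\bZ/d'\bZ)\text{-orbits of fiducials}\}$ quoted from \cite[\S7]{AYZ}, together with the observation noted at the end of \S\ref{sec:Weil} that in the first list~(\ref{rcfdimz}) the number of orbits equals the class number $h_K$; one then checks $L/K$ is abelian and unramified (using that the $f_v$-values generate $\Rd{1}$ and that ramification of $\Rd{1}/K$ is controlled by $\mdp$) to pin down $L$ as the Hilbert class field rather than some other degree-$h_K$ extension.

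Next I would build the isomorphism~(\ref{arith}). Given $\sigma \in \Gal{\Rd{1}}{L}$, the function $\sigma \circ f_v$ takes the same value-set as $f_v$ (since $\sigma$ fixes $L$, hence the set setwise), so by a rigidity/uniqueness property of centred fiducials $\sigma \circ f_v = g_\sigma f_v$ for a unique $g_\sigma \in \glzd$ modulo $\stabo$; conversely each such $g$ that preserves the value-set arises this way. This gives a well-defined injective map $\Gal{\Rd{1}}{L} \hookrightarrow \MS/\stabo$ where $\MS$ is the subgroup of $\glzd$ consisting of all $g$ with $g f_v$ a Galois-conjugate of $f_v$ over $L$; one then shows it is surjective by a counting argument ($\#\MS/\stabo = [\Rd{1}:L]$ from the orbit-stabilizer count for the Galois orbit of $f_v$) and that it is a homomorphism because $(\sigma\tau) f_v = \sigma(\tau f_v) = \sigma(g_\tau f_v) = g_\tau (\sigma f_v) = g_\tau g_\sigma f_v$ (the two actions commute, so $g_{\sigma\tau} \equiv g_\sigma g_\tau \bmod \stabo$ after checking abelianness). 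That $\MS$ is abelian is forced because $\Gal{\Rd{1}}{K}$ is abelian and $\stabo$ is central in it; maximality of $\MS$ among abelian subgroups of $\glzd$ containing $\stabo$ follows since any larger abelian group would, via the same correspondence, produce a proper abelian overgroup of $\Gal{\Rd{1}}{L}$ inside $\Gal{\Rd{1}}{K}$ acting compatibly, contradicting that $\Rd{1}$ is already the full field $\QQ(f_v)$. Finally, for the $F_z$-type orbits I would verify directly that $\MS$ coincides with the centralizer $\CS$ by showing $\CS$ is abelian in these cases (a small-rank matrix computation using the explicit form $F_z = \smat{0 & d-1 \\ d+1 & d-1}$), while for the $F_a$-type orbits one invokes the classification in \cite{MAIB} to identify $\MS$ with one of the three listed conjugacy classes of subgroups.

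The main obstacle I expect is the rigidity step: showing that $\sigma \circ f_v = g f_v$ for some \emph{single} $g \in \glzd$, rather than the weaker statement that $\sigma$ permutes the values. This requires knowing that the assignment $j \mapsto f_v(j)$ is determined, among all functions with the given value-set and the Heisenberg-covariance built into $\Delta_\bfj$, by its value-set up to the $\glzd$-action — essentially a uniqueness statement for centred fiducials within their extended Clifford orbit, which is exactly where the hypothesis ``strongly centred'' and the specific structure of $\stabo$ (cyclic, containing the canonical order-3 element) are doing the real work. Controlling the $F_a$-type case without simply quoting \cite{MAIB} wholesale would be the secondary difficulty, since there $\MS$ is genuinely smaller than the centralizer and one needs the finer analysis of that reference.
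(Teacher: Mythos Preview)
The paper gives no proof of Proposition~\ref{CS}: the statement terminates with \verb|\qedhere\popQED| inside the proposition environment itself, signalling that it is recorded as a computational fact verified case-by-case for the finite list of orbits in~(\ref{rcfdimz}), in the same spirit as Propositions~\ref{rcfobs} and~\ref{units}. The surrounding text calls it ``a tantalizing observation'' and the subsequent Remark treats the appearance of the Hilbert class field as a phenomenon to be explained ``in a forthcoming paper''. So there is nothing to compare your argument \emph{to}: you are attempting a conceptual proof where the paper offers only empirical verification.

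That said, your sketch has a genuine gap at exactly the point you flag yourself. The ``rigidity step''---that for every $\sigma\in\Gal{\Rd{1}}{L}$ there exists $g_\sigma\in\glzd$ with $\sigma\circ f_v = g_\sigma f_v$---is not a consequence of anything established in the paper or in \cite{AYZ}; it \emph{is} the empirical content being reported. Nothing about strong centredness or the cyclic structure of $\stabo$ forces a Galois automorphism of the value-field to act on $f_v$ through a linear substitution of the argument; that this happens is precisely the observed link between arithmetic and geometry. Your counting argument for surjectivity presupposes the injection already exists, and your maximality argument is circular (a larger abelian subgroup of $\glzd$ need not correspond to any Galois action at all, so it would not contradict $\Rd{1}=\QQ(f_v)$). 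Likewise the identification of $L$ with the Hilbert class field requires showing $L/K$ is unramified at all finite primes, which does not follow from the ramification of $\Rd{1}/K$ being bounded by $\mdp$; again this is checked, not deduced.
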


We are grateful to John Coates for the following observation, which we hope to address in a forthcoming paper: 

\begin{remark}
The appearance of~$H_K$ in~(\ref{arith}) cannot be a coincidence. 
The LHS is ostensibly a geometrically defined abelian subgroup of $\glzd$. 
On the other hand the RHS contains information about a subgroup of a ray class group, and therefore also potentially about the ideal class group~$\cK$ of~$K$. 
But this cannot be true in general, since the structure of~$\cK$ is very erratic as~$D$ varies. 
We can therefore be confident that $L$ will typically contain~$H_K$. 
\end{remark}
Our empirical observations suggest that $L$ may be identical with~$H_K$ in all dimensions, not just those in (\ref{dimz}). 
It follows that the~$\Gal{L}{K}$-set of distinct $\Sp_2(\ZZ/d'\ZZ)$-orbits may actually be a~$\cK$-set. 
Preliminary numerical results communicated to us by Andrew Scott for certain higher dimensions, wherein~$\cK$ is much larger than for the dimensions in~(\ref{dimz}), provide additional evidence for this speculation.

\section{Canonical units associated to the ray class fields}\label{canon}
We now proceed to link invariants of the \SICs\ with canonical units associated to the ray class fields. 
The condition of equiangularity means that 
\blue{when  \hbox{$\bfj \nequiv \binom{0}{0} \bmod d$}, the inner products  $f_v(j)$ defined in Eq.~\eqref{eq:fvjdf} all have absolute value $\invnf$.  It follows that the numbers
\begin{align}\label{eq:nminnerprd}
\mathrm{e}^{i \theta_v(\bfj)} = \begin{cases} 1 \qquad & \text{if $\bfj \equiv \binom{0}{0} \bmod d$} \\ \sqrt{d+1} f_v(j) \qquad &\text{otherwise} \end{cases}
\end{align} 
are all in $U(1)$.  We refer to them as \emph{normalized inner products}, and they are the subject of this section.  
The fact that we are now restricting ourselves to strongly centred fiducials means that the numbers  $f_v(j) $ are all in $\Rd{1}$.  However, this may not be true of the numbers $\mathrm{e}^{i \theta_v(\bfj)}$.  
We therefore need to introduce the extension field $\Sd=\Rd{1}(\nf)$ and its ring of integers $\zSd$.  
It is also convenient to define $\Sdr = \Rd{}(\nf)$ (see below for the relation between  $\Sd$, $\Sdr$ and $\Rd{1}$, $\Rd{}$).  
We then have }

\blue{
\begin{proposition}\label{units}
Fix any~$d$ in Eq.~\emph{(\ref{dimz})} and consider \blue{a strongly-centred \SIC\ fiducial} on an~$ \EAut^0(H)$-orbit.  
\begin{itemize}\item[(i)] Every one of the normalized inner products lies in the unit group~$\zSd^\times$. \end{itemize}
Let~$u_d$ be a normalized inner product of \emph{maximal} degree~$n_d$ over $K(\nf)$. 
\begin{itemize}
	\item[(ii)] $u_d$ generates\footnote{Except in orbit~12b where we also need to adjoin~$\sqrt{3}$.}\ \ $\Sd$ over~$\QQ(\nf)$ and therefore over~$K(\nf)$.
	\item[(iii)] Given any complex embedding of~$\Sd$, the~$\gsk$-conjugates of~$u_d$ all lie on~$U(1)$. 
	\item[(iv)] The minimal polynomial~$f_d(x)$ of~$u_d$ is a~\emph{reciprocal} polynomial: that is,~$x^{n_d} f_d(\frac{1}{x}) = f_d(x)$. 
\end{itemize}
\end{proposition}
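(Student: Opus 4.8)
The plan is to treat the four assertions separately, since they probe rather different features of the normalized inner products, but all four rest on the single empirical input furnished by Proposition~\ref{rcfobs} together with the explicit shape of the inner products. For (i) I would start from the identity $f_v(j)=\Tr\Pi_v\Delta_j$ and the fact, recorded just before Proposition~\ref{units}, that for $j\nequiv\binom{0}{0}\bmod d$ one has $f_v(j)=\mathrm{e}^{i\theta(j)}/\nf$; hence $\mathrm{e}^{i\theta(j)}=\nf\,f_v(j)\in\Sd=\Rd{1}(\nf)$. To see it is an algebraic integer I would use the standard SIC relations: for a fiducial, $\sum_{j}|f_v(j)|^2=d+1$ and, more to the point, the $\Delta_j$ satisfy $\Delta_j^{d'}=\pm I$, so the matrix elements $f_v(j)$ are sums of products of entries of $\Pi_v$ with $d'$-th roots of unity, all of which lie in $\zSd$ after clearing the single denominator $\nf$; the product of $\mathrm{e}^{i\theta(j)}$ with its complex conjugate is $1$, which already shows each is a unit \emph{provided} one knows it is an integer. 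The cleanest route to integrality is to exhibit $\mathrm{e}^{i\theta(j)}$ as a root of a monic polynomial over $\zSd$: the Gram-matrix / fiducial equations for SICs are polynomial with coefficients in $\ZZ[\tfrac1{d+1}]$, and multiplying through by powers of $d+1$ gives monic integral relations. I expect this step to require a careful but routine bookkeeping of denominators.

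For (ii), the degree statement $[\Sd:K(\nf)]=n_d$ should follow once we know $\QQ(f_v)=\Rd{1}$ (strong centredness) and $\Sd=\Rd{1}(\nf)$: the normalized inner products generate $\Sd$ over $\QQ(\nf)$ because the $f_v(j)=\mathrm{e}^{i\theta(j)}/\nf$ do, and $\QQ(\nf)\supseteq\QQ$, $K(\nf)\subseteq\QQ(\nf)(\sqrt D)$. The assertion that \emph{all} conjugates of $u_d$ appear as normalized inner products for the same SIC is the crux: here I would invoke the Galois action together with the $\espzd$-action on $f_v$ developed in \S\ref{GandA}. Concretely, $\Gal{\Sd}{\QQ(\nf)}$ permutes the values $\{f_v(j)\}$ because these values generate the field and the Galois group acts on the fiducial projector $\Pi_v$ through an element of the extended Clifford normalizer (this is exactly the content of the Galois/Clifford correspondence behind Proposition~\ref{rcfobs}); so each Galois conjugate of $u_d=\nf\,f_v(j_0)$ is again of the form $\nf\,f_v(j)$ for some $j$, i.e.\ another normalized inner product of the same SIC. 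The exceptional footnote for orbit $12b$ (adjoining $\sqrt3$) is because there $\QQ(f_v)$ falls one quadratic step short of $\Rd{1}$, reflecting the $F_a$-type stabilizer. The main obstacle in the whole proposition lives here: one must know that the Galois action on $\Sd$ is realized geometrically by Clifford operations permuting the displacement operators, which is precisely the empirically-verified input and not something we can derive from first principles — so the honest statement is that (ii) holds \emph{granted} Proposition~\ref{rcfobs} and the strong-centredness hypothesis.

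For (iii), given (ii) the $\gsk$-conjugates of $u_d$ are all normalized inner products of the \emph{same} SIC, and every normalized inner product has modulus $1$ by construction (the equiangularity condition $|f_v(j)|=\invnf$ with the stated $\theta(j)\in\RR$); so they all lie on $U(1)$ in the given embedding, and hence in every embedding since they form a full Galois orbit. For (iv), reciprocity of the minimal polynomial $f_d$ is a formal consequence of (iii): if $\alpha$ is a root of $f_d$ with $|\alpha|=1$ then $\bar\alpha=\alpha^{-1}$, and because $f_d$ has coefficients in $K(\nf)\subset\RR$ (indeed the coefficients are fixed by complex conjugation, $\Rd{1}$ being one of the quadratic subfields whose structure is pinned down in Proposition~\ref{nonG} and its corollary) the set of roots is stable under $\alpha\mapsto\bar\alpha=\alpha^{-1}$; hence $x^{n_d}f_d(1/x)$ is a monic polynomial with the same roots as $f_d$, so they coincide. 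The one point to check is that $f_d$ really does have real (in fact totally real up to the $\nf$ ambiguity) coefficients — this follows because $u_d\in\Rd{1}\cdot\nf$ and $\Rd{1}$ is stable under complex conjugation with $\nf$ real, so the coefficient field $K(\nf)$ is totally real and $\alpha\mapsto\bar\alpha$ is induced by an element of $\Gal{\Sd}{K(\nf)}$. I would present (iii) and (iv) together as a short corollary of (ii), flagging that essentially all the mathematical content — and the dependence on the unproved Proposition~\ref{rcfobs} — is concentrated in (i)'s integrality bookkeeping and (ii)'s geometric realization of the Galois action.
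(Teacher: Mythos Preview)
The paper does not prove (i) and (ii) at all: the proposition is stated with a bare \qed{} and the text immediately after says ``the units may be calculated (with some considerable work) from the set of inner products using the fiducial vectors in~[AYZ]''. In other words, (i) and (ii) are \emph{empirical} facts, checked dimension by dimension for the list~(\ref{dimz}). Your attempt to derive (i) from ``denominator bookkeeping'' in the Gram-matrix equations is a genuine gap: the SIC equations for the fiducial give polynomial relations with coefficients in $\ZZ[\tfrac{1}{d+1}]$, but clearing those denominators does \emph{not} produce a monic integral relation for $\mathrm{e}^{i\theta(j)}$, and no general argument of this type is known. Likewise, your argument for (ii) leans on a Galois/Clifford correspondence which is itself part of the empirical package---you say as much at the end, but then the conclusion should be that (i) and (ii) are computational observations, not that they follow from a ``routine'' bookkeeping step.

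For (iii) and (iv) you are essentially right, and close to the paper's Remark, which shows $\{(\mathrm{i}),(\mathrm{ii})\}\Rightarrow(\mathrm{iii})\Rightarrow(\mathrm{iv})$. The paper's route is slightly cleaner than yours: rather than arguing that every $\gsk$-conjugate of $u_d$ is again a normalized inner product (which, as you note, requires (ii) and the geometric Galois action), it uses only that $\Sd/K$ is \emph{abelian}. Complex conjugation $\sigma_c$ is then a well-defined central element of $\gsk$, and since $|u_d|=1$ we have $\sigma_c(u_d)=u_d^{-1}$; commutativity gives $\sigma_c(g(u_d))=g(\sigma_c(u_d))=g(u_d)^{-1}$ for every $g\in\gsk$, which is (iii). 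Transitivity on the roots (from (ii)) then forces $f_d$ to factor into quadratics $x^2-(u+u^{-1})x+1$, giving (iv). This avoids your detour through checking that the coefficient field $K(\nf)$ is totally real.
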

\begin{remark}
For more on the Galois theory of fields generated by reciprocal polynomials see~\cite{lalande}. 
\end{remark}
\begin{proof}
As with the main proposition, this is proven through lengthy computations using \emph{Magma}~\cite{magma}. 
The main steps are again indicated in Section~\ref{very}.
\end{proof}

\begin{corollary}
Let~$u_d$ be a normalized inner product of \emph{maximal} degree~$n_d$ over $K(\nf)$. 
Then all of its images under the action of $\Gal{\Sd}{K(\nf}$ appear as normalized inner products for the same \SIC. 
In particular,~$[\Sd:K(\nf)]=n_d$.
\end{corollary}
\begin{proof}
Immediate consequence of Proposition~\ref{units} and Lemma~\ref{lm:fvprm}.
\end{proof}
We now examine the relation between   $\Sd$, $\Sdr$ and $\Rd{1}$, $\Rd{}$. }
Observe at the outset that~$\nf\in K$ if and only if either~$d+1$ or~$d-3$ is a perfect square. 

\begin{proposition}\label{nfinout}
Suppose $d\geq4$ and neither~$d+1$ nor~$d-3$ is a perfect square. Then 
\begin{itemize}
	\item[(i)] If $d$ is even then~$\nf\in\Rd{0}$.
	\item[(ii)] If $d\equiv1\ (4)$ then~$\nf \not \in \Rd{0}$.
	\item[(iii)] If $d\equiv3\ (4)$ then~$\nf \in \Rd{0}$ iff the extension $K(\nf)/K$ is everywhere unramified.  In particular~$\nf \not\in \Rd{0}$ if the class number of~$K$ is~1 (that is, if~$H_K=K$). 
\end{itemize}
\end{proposition}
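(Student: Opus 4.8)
The plan is to analyze the ramification of the quadratic extension $K(\nf)/K$ prime by prime and compare it to the ray class field $\Rd{0}$, which by definition allows ramification only at primes dividing $d'=d'\zk$ and at no infinite place. Recall $\nf = \sqrt{d+1}$ and that $(d+1)(d-3) = (d-1)^2 - 4$ has square-free part $D$; since neither $d+1$ nor $d-3$ is a perfect square, $\nf \notin K$, so $K(\nf)/K$ is genuinely quadratic. The key identity is that in $K$ we have $d+1 = \nf^2$ and $d-3 = (d+1)(d-3)/(d+1)$, and the square-free parts of $d+1$ and $d-3$ multiply (up to squares in $\QQ$) to $D$; hence over $K$ the elements $d+1$, $d-3$ and $D$ differ pairwise by squares, so $K(\sqrt{d+1}) = K(\sqrt{d-3})$. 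This symmetry between $d+1$ and $d-3$ will be used throughout.

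First I would handle the infinite places: since $d\geq 4$ we have $d+1 > 0$, and its conjugate under $\sD$ is also positive because $d+1 \equiv \nf^2$ forces $\nf^\sD{}^2 = (d+1)^\sD = d+1 > 0$ in the appropriate sense — more carefully, $d+1$ and $d-3$ are rational integers, hence fixed by $\sD$ and totally positive for $d\geq 4$, so $K(\nf)/K$ is unramified at both infinite places. Thus $\nf \in \Rd{0}$ holds if and only if $K(\nf)/K$ is unramified at every finite prime not dividing $d'$. For a quadratic Kummer extension $K(\sqrt{m})/K$ with $m = d+1$, ramification at a finite prime $\pp \nmid 2$ occurs exactly when $v_\pp(m)$ is odd; at primes above $2$ one uses the standard local criterion. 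The statement then reduces to: the odd part of the conductor of $K(\nf)/K$ is supported on primes dividing $d$, together with a $2$-adic analysis.

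The three cases then split according to the $2$-adic behaviour. For (i), $d$ even: then $d' = 2d$, so $2 \mid d'$; one shows $d+1$ is odd and that any odd prime $\pp$ with $v_\pp(d+1)$ odd must divide $d$ — this is where the identity $(d+1)(d-3) = (d-1)^2 - 4$ and the fact that $\gcd(d+1, d) = \gcd(d-1,d) = 1$ divides things up, so actually $v_\pp(d+1)$ odd forces $\pp$ to ramify in $K/\QQ$ in a way already captured, or $\pp \mid D$; one must check that all such $\pp$ divide $d'$. The cleanest route: show $K(\nf) \subseteq K(\sqrt{d+1}, \sqrt{d-3})$ is unramified outside $2$ and primes dividing $d\pm 1$ and $d-3$, then use that primes dividing $d+1$ or $d-3$ but coprime to $D$ appear to even order in $(d+1)(d-3)=$ (square)$\cdot D$. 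For (ii), $d \equiv 1 \pmod 4$: here $d' = d$ is odd, so $2 \nmid d'$; then $d+1 \equiv 2 \pmod 4$, so $v_2(d+1) = 1$ is odd, and since $2\nmid D$ in this case (one checks $D$ odd when $d\equiv 1 \bmod 4$ via $(d+1)(d-3)\equiv 2\cdot 2 = 4 \bmod 8$... actually $v_2((d+1)(d-3)) = v_2(d+1)+v_2(d-3)$ with $d-3 \equiv 2 \bmod 4$ too, so $v_2 = 2$, meaning $2 \nmid D$), the prime $2$ (unramified or split in $K$ since $D$ odd) ramifies in $K(\nf)/K$ but $2 \nmid d'$, so $\nf \notin \Rd{0}$. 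For (iii), $d \equiv 3 \pmod 4$: then $d+1 \equiv 0 \pmod 4$ and $d - 3 \equiv 0 \pmod 4$, so writing $d+1 = 4e$, $K(\nf) = K(\sqrt{e})$ and one shows all finite ramification of $K(\sqrt{e})/K$ outside $d$ forces $\sqrt{e}$-ramification to come from primes dividing $D$, i.e.\ from primes ramified in $K/\QQ$; the extension $K(\nf)/K$ is then everywhere unramified precisely when it is unramified at the primes dividing $D$ as well, which is the condition "$K(\nf)/K$ everywhere unramified." The final clause follows since an everywhere-unramified abelian extension of $K$ is contained in the Hilbert class field, which equals $K$ when $h_K = 1$, contradicting $\nf\notin K$.

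\textbf{Main obstacle.} The delicate point is the $2$-adic local analysis and, relatedly, disentangling the ramification contributions of primes dividing $d+1$ or $d-3$ from those dividing $D$ and those dividing $d$. One must carefully track, for each finite prime $\pp$ of $K$, whether $v_\pp(d+1)$ is even or odd and whether $\pp \mid d'$, using repeatedly that $(d+1)(d-3)$ is $D$ times a perfect square and the coprimality relations among $d$, $d\pm 1$, $d-3$. The case $d\equiv 3 \pmod 4$ is the subtlest because there $d'=d$ is odd yet $d+1$ is divisible by $4$, so the naive "$2$ ramifies" argument of case (ii) fails and one genuinely needs the sharper Kummer criterion at $2$ together with the hypothesis on unramifiedness of $K(\nf)/K$ — this is exactly why the statement for $d\equiv 3\pmod 4$ is conditional rather than absolute.
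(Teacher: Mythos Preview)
Your overall strategy---compute the conductor of $K(\nf)/K$ and compare it to the modulus $d'$---is exactly the paper's. But the execution has two real problems.

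\textbf{Odd primes.} You try to show that any odd prime $\pp$ ramifying in $K(\nf)/K$ divides $d'$, and at one point assert that such a prime ``must divide $d$''. That is false: primes dividing $d+1$ are coprime to $d$. The correct statement, which the paper proves cleanly, is stronger: \emph{no} odd prime ramifies in $K(\nf)/K$ at all. The paper's argument uses the biquadratic structure: $K(\nf)=\QQ(\sqrt D,\sqrt{d+1})$ has Galois group $(\ZZ/2)^2$ over~$\QQ$; an odd prime $p$ dividing the square-free part of $d+1$ also divides $D$ (since $\gcd(d+1,d-3)\mid 4$), so it ramifies in both $\QQ(\sqrt{d+1})$ and $K$; but tame inertia is cyclic, hence of order at most $2$, so it cannot ramify further in $K(\nf)/K$. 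Equivalently, in valuation language: if $p\nmid D$ then $e_\pp=1$ and $v_p(d+1)$ odd would force $p\mid D$, a contradiction; if $p\mid D$ then $e_\pp=2$ and $v_\pp(d+1)=2v_p(d+1)$ is even. Either route gives the clean statement you need.

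\textbf{Case (ii).} Here there is a genuine error. You correctly compute $v_2\bigl((d+1)(d-3)\bigr)=2$ and deduce $2\nmid D$, but then assert that ``$D$ odd'' implies $2$ is unramified or split in $K$. That is false: one needs $D\equiv 1\pmod 4$. In fact $(d+1)(d-3)=4(4k^2-1)$ with $4k^2-1\equiv 3\pmod 4$, so $D\equiv 3\pmod 4$ and $2$ \emph{does} ramify in $K$. Writing $(2)=\pp^2$, you get $v_\pp(d+1)=2v_2(d+1)=2$, which is even, so your ``$v_\pp$ odd'' criterion (valid only for odd primes anyway) gives nothing. The paper's fix is again the biquadratic picture: when $d\equiv 1\pmod 4$ all three quadratic subfields $\QQ(\sqrt D),\QQ(\nf),\QQ(\nft)$ are ramified at~$2$ (the first because $D\equiv 3\pmod4$, the others because $d+1,d-3\equiv 2\pmod 4$), so the inertia group at $2$ is the full $(\ZZ/2)^2$ and $K(\nf)/K$ is genuinely ramified above~$2$. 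Since $2\nmid d'=d$, this gives $\nf\notin\Rd0$.

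Once the odd-prime fact is in hand, your treatment of (i) (where $4\mid d'$ absorbs any $2$-adic conductor) and (iii) (where ``unramified at $2$'' is equivalent to ``everywhere unramified'') becomes correct, matching the paper.
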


\begin{proof}
By assumption~\disp{K(\nf)} is a totally real biquadratic extension of~$\QQ$, with exactly three distinct proper quadratic subextensions~\disp{K=\qr{\qD},\QQ(\nf)} and~\disp{\QQ(\nft)}.  
If~$p$ is an odd prime divisor of the square-free part of~$d+1$ then~$p$ will ramify in~\disp{K/\QQ} but not in~\disp{K(\nf)/K}.  
So~\disp{K(\nf)/K} is unramified at every place not lying over the prime~2.  

When~$d$ is even,~\disp{d'=2d} is divisible by at least~$2^2$ whereas~\disp{d+1} is odd, so the~2-primary part of the conductor of~\disp{K(\nf)/K} divides that of~$d'$~\cite[\S17F]{harv}.  
So~$\nf\in\Rd{0}$, proving~(i).  

When~$d$ is odd, so~$d'=d$, the only cases where~$\nf$ can possibly lie in the ray class field~$\Rd{0}$ are those where~2 does not ramify in the extension~\disp{K(\nf)/K}.  
This proves assertion~(iii) for~\emph{any} odd~$d$, with the consequence that the extension~~$H_K/K$ must be non-trivial if~$\nf \in \Rd{0}$.  

To prove~(ii), if~$d\equiv1\ \bmod 4$ then we claim that the unique prime of~$K$ above~2 always ramifies again in~\disp{K(\nf)/K}: meaning that~$\nf$ can never lie in~$\Rd{0}$.  
First of all,~\disp{\QQ(\nf)/\QQ} and \disp{\QQ(\nft)/\QQ} are each ramified over~2, since both~\disp{d+1} and~\disp{d-3\equiv2\ \bmod 4}.   
Also~\disp{(d+1)(d-3)\equiv12\  \bmod 16} and since~\disp{2^2\mid(d+1)(d-3)} 
it follows that~\disp{D\equiv3\ \bmod 4}.  
So~2~is ramified in all three quadratic subextensions~\disp{\qr{\qD},\ \QQ(\nf),\ \QQ(\nft)}. 
Therefore~\cite[\S17F]{harv} the inertia group at~2 must be the whole Galois group~$\Gal{K({\tinynf})}{\QQ}$, proving the claim.   
\end{proof}

\begin{corollary}\label{f1eqr1dims}
For the dimensions in list~(\ref{dimz}) it follows that~$\nf\in\Rd{0}$ except for~$d=5$, $9$, $11$, $13$, $17$, $21$. 
\emph{(}The case $d=11$ follows from the fact that the class number of~$\QQ(\sqrt{6})$ is~1\emph{)}. 
\end{corollary}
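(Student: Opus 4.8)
The plan is simply to run Proposition~\ref{nfinout} against the list~(\ref{dimz}), dimension by dimension, after first peeling off the cases to which it does not apply. Recall that~$\nf\in K\subseteq\Rd{0}$ whenever~$d+1$ or~$d-3$ is a perfect square (the remark preceding Proposition~\ref{nfinout}), so the first step is to identify which~$d$ in~(\ref{dimz}) are of this ``square type''. A short inspection shows these are~$d=4,7,8,12,15,19,24,28,35,39,48$ --- for instance~$d-3=9,16,25,36$ when~$d=12,19,28,39$, and~$d+1=9,16,25,36,49$ when~$d=8,15,24,35,48$, while~$d-3=1,4$ when~$d=4,7$. For all eleven of these,~$\nf\in\Rd{0}$ with nothing further to check.

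The remaining thirteen dimensions are~$d=5,6,9,10,11,13,14,16,17,18,20,21,30$, and for each of these neither~$d+1$ nor~$d-3$ is a perfect square, so the hypotheses of Proposition~\ref{nfinout} are met. The seven even ones --- $d=6,10,14,16,18,20,30$ --- fall under part~(i), giving~$\nf\in\Rd{0}$ immediately.

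This leaves the six odd dimensions~$d=5,9,11,13,17,21$, which I claim are precisely the exceptions asserted in the corollary. Five of them, namely~$d=5,9,13,17,21$, satisfy~$d\equiv1\bmod 4$, so part~(ii) gives~$\nf\notin\Rd{0}$ directly. The last one is~$d=11$, where~$d\equiv3\bmod 4$; here~$D$ is the square-free part of~$(d+1)(d-3)=96$, so~$D=6$ and~$K=\QQ(\sqrt{6})$. By part~(iii), $\nf\in\Rd{0}$ would force~$K(\nf)/K$ to be everywhere unramified, hence to be contained in the Hilbert class field of~$\QQ(\sqrt{6})$; but the latter has class number~$1$, so no nontrivial unramified abelian extension exists, and therefore~$\nf\notin\Rd{0}$.

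There is no genuine obstacle here: the entire content already sits in Proposition~\ref{nfinout} and the pre-proposition remark, and the only arithmetic input beyond bookkeeping is the classical fact that~$\QQ(\sqrt{6})$ has class number~$1$, which is exactly what closes out~$d=11$. The one place to be careful is the elementary-but-tedious triage of the~$24$ dimensions into the three families --- square type, even non-square, odd non-square --- and, within the last family, the determination of~$d\bmod 4$; getting this classification right is what makes the exception set come out to be precisely~$\{5,9,11,13,17,21\}$.
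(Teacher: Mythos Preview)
Your proof is correct and follows exactly the approach the paper intends: the corollary is stated in the paper with no proof beyond a \qed, so the content is simply the case-by-case application of the remark preceding Proposition~\ref{nfinout} (for the square-type dimensions) and of parts~(i)--(iii) of that proposition (for the rest), together with the class number~$1$ fact for~$\QQ(\sqrt{6})$ at $d=11$. Your triage of the $24$ dimensions into the three families is accurate, and the arithmetic checks out.
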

\blue{\noindent Since~$\Rd{0}$ is the maximal totally real subfield of~$\Rd{}$, it follows therefore that~$\nf \notin \Rd{}$ for  $d=5$, $ 9$, $11$, $13$, $17$, $21$.  We conclude
\begin{itemize}
\item If $d=4$, $6$--$8$, $10$, $12$, $14$--$16$, $18$--$20$, $24$ , $28$, $30$, $35$, $39$, $48$ then $\Sd = \Rd{1}$, $\Sdr =\Rd{}$.
\item If $d=5$, $9$, $11$, $13$, $17$, $21$ then $\Sd$, $\Sdr$ are degree 2 extensions of $\Rd{1}$, $\Rd{}$ such that $\Sd \cap \Rd{}=\Rd{1}$. 
\end{itemize}
}
The next result is striking but the size of the necessary calculations has prevented us from testing dimensions~$16,17,18,20,21,30$ and~$39$ in list~(\ref{dimz}). Let~\disp{\UU} denote the group of units of the ring of integers of the field~\disp{\Sd} under any one of its complex embeddings, and let~\disp{\VV} be the subgroup generated by the normalized inner products. 
By construction~\disp{\VV} is contained in the unit circle subgroup~\disp{\UU\cap U(1)} of~\disp{\UU}. 
We would like to compare the relative ranks of~\disp{\VV} and~\disp{\UU\cap U(1)}. 
Let~\disp{N_d=[\, \Sd:K\,]}, so~\disp{N_d=n_d} or~\disp{2n_d} according to whether~\disp{\nf\in\Rd{0}} or not. 
Part~(ii) of the Corollary to~Proposition~\ref{nonG} implies that~\disp{\Sd} has~\disp{N_d} real places and~\disp{\tfrac{N_d}{2}} pairs of complex places; so by Dirichlet's unit theorem~\disp{\UU} is an abelian group of rank~\disp{\tfrac{3N_d}{2}-1}. 
By the same Proposition~\disp{\Rd{0}(\nf)} is the maximal real subfield of~\disp{\Sd} and has index~2: so it is a consequence of the lemma in~\S5 of~\cite{naka} that the unit circle subgroup~\disp{\UU\cap U(1)} of~\disp{\UU} has~\disp{\ZZ}-rank~\disp{\tfrac{N_d}{2}}.

\blue{
The following proposition is split into two parts.  The first part holds for the set of   dimensions $4$--$15$, $19$, $24$, $28$, $35$ and $48$ for which  we were able to calculate the group  $\VV$.  The second part holds for the smaller set of dimensions $4$--$8$, $12$, $19$ for  which we were able to calculate both the group $\VV$ and the group $\UU$.
\begin{proposition}\label{prp:ugp}
(A) In dimensions $4$--$15$, $19$, $24$, $28$, $35$ and $48$,  $\VV$ is generated by the $\gsk$-orbit of~$u_d$.  One has
\begin{align}
\rank(\VV) = 
\begin{cases}
 \rank (\UU\cap U(1)), & d = 7,15,19,35; \\
 \frac{1}{2} \rank (\UU\cap U(1)), & d = 4,5,6,8,9,10,11,12,13,14,24,28,48. 
\end{cases}
\end{align}

(B) In dimensions $4$--$8$, $12$ and $19$, $\VV$ is also a direct summand of ~\hbox{$\UU\cap U(1)$}. 
In particular,~\hbox{$\UU\cap U(1)=\VV$} in dimensions $7$ and $19$.
\end{proposition}
\begin{proof}
Again, this is proven via lengthy computations using \emph{Magma}~\cite{magma}, and the main steps are indicated in Section~\ref{very}. 
Note that the calculation of $\UU$, in those dimensions where it was possible, used the function \emph{IndependentUnits} in \emph{Magma}, which assumes the validity of the generalized Riemann hypothesis.
\end{proof}
}

Finally, preliminary calculations by one of us (GM) together with Steve Donnelly and reported in the first draft of this paper indicated that  the logarithms of these canonical units may be related to the values of~$L$-functions associated to the extensions, following the programme laid out in the Stark Conjectures.  \blue{Subsequent work by Gene Kopp has numerically confirmed this conjecture for dimension~$d=5$~\cite{kopp} and some higher dimensions~\cite{kopp2}, via the values at~$s=0$ of the first derivatives of the partial L-functions attached to the ray classes. 
}

\section{Verification of Propositions~\ref{rcfobs},~\ref{prpconstfield},~\ref{units}, and~\ref{prp:ugp}}\label{very}
\blue{A full account of the verification of these propositions would be very tedious. In this section we confine ourselves to an indication of the main steps.

\subsection{Propositions~\ref{rcfobs},~\ref{prpconstfield}}
The first obstacle we face is the definition of the field $\QQ(E)$, in terms of the ratios of components of the vectors $v$ defining the lines.  
Conceptually speaking this is the most natural way of associating a field to a set of equiangular lines.  
However, it is cumbersome to work with.  
To obtain a computationally more convenient definition we work with the projectors
\begin{align}
\Pi_v = \frac{vv^\dagger}{(v,v)}
\end{align}
instead of the vectors $v$. Let $\QQ(\Pi_v)$ be the field generated by $\frac{v_j v^{*}_k}{(\bfv, \bfv)}$ of $\Pi_v$, and let $L$ be the smallest subfield of $\mathbb{C}$ containing  $\QQ(\Pi_v)$ for every $\mathbb{C}v\in E$.  It is straightforward to verify that $L$ is Galois if and only if $\QQ(E)$ is Galois, in which case $L = \QQ(E)$. 
We can further simplify the computations by observing the following. 
The fact that the lines form an orbit under the action of the group generated by the matrices $X_d$, $Z_d$ specified by Eq.~\eqref{eq:nminnerprd} means that $L  = \QQ(\Pi_v, \zeta_d)$, the field generated by the matrix elements of the single projector $\Pi_v$ together with the $d^{\rm{th}}$ root of unity $\zeta_d = e^{2\pi i /d}$.  
The problem of verifying Proposition~\ref{rcfobs} thus reduces to showing that for each dimension $d$ in the list~\eqref{dimz} there is a fiducial projector $\Pi_v$ such that  $\QQ(\Pi_v, \zeta_d)$ is the ray class field $\mathfrak{R}$.

At this point we ought to comment on a seeming discrepancy between what is said in the previous paragraph, and what is said in, for example, ref.~\cite{MAIB}, where the SIC field is defined to be $\QQ(\Pi_v, \zeta_{2d})$.  
In fact,  the discrepancy is only apparent, since for every known SIC the two fields are  identical.  
Indeed, it is immediate that   $\QQ(\Pi_v, \zeta_{2d}) = \QQ(\Pi_v, \zeta_{d})$ when $d$ is odd, since in that case one has $\zeta_{2d} = -\zeta_d^{\frac{d+1}{2}}$.   
If, on the other hand, $d$ is even we appeal to the fact that every known  SIC fiducial $\Pi_v$  is stabilized by a canonical order 3 Clifford unitary~\cite{MAIB,bos} to deduce that
\begin{align}
&\QQ\big( \{ \zeta^{k}_d \Tr(\Delta_\bfj ) \Pi_v \colon k, j_1,j_2 \in \mathbb{Z}/(d\mathbb{Z}) \text{ and } j_1 j_2 =0 \mod 2 \}\big)
\\
&\hspace{4 cm}=
\QQ\big( \{ \zeta^{k}_d \Tr(\Delta_\bfj ) \Pi_v \colon k, j_1,j_2 \in \mathbb{Z}/(d\mathbb{Z})\}\big),
\nonumber
\end{align}
and to deduce in turn that
\begin{align}
\zeta_{2d} \Tr(X_d Z_d \Pi_v)  &\in \QQ\big( \{ \zeta^{k}_d \Tr(\Delta_\bfj ) \Pi_v \colon k, j_1,j_2 \in \mathbb{Z}/(d\mathbb{Z}) \text{ and } j_1 j_2 =0 \mod 2 \}\big)
\\
&
\subseteq \QQ(\Pi_v,\zeta_d).
\nonumber
\end{align}
Since $\Tr(X_d Z_d \Pi_v)$ is also in $\QQ(\Pi_v,\zeta_d)$ it follows that $\zeta_{2d} \in \QQ(\Pi_v,\zeta_d)$ and, consequently, that $\QQ(\Pi_v,\zeta_{2d}) = \QQ(\Pi_v,\zeta_d)$.  
This point incidentally makes clear what might otherwise be doubted: that the introduction of $(2d)^{\rm{th}}$ roots of unity is unavoidable when $d$ is even.

It is shown in ref.~~\cite{MAIB} that $\QQ(\Pi_v , \zeta_{2d})$ is invariant under the action of the extended Clifford group.  
The argument in the last paragraph means that the same is true of $\QQ(\Pi_v , \zeta_{d}) $. 
This proves Proposition~\ref{prpconstfield}. 

The remainder of the verification of Proposition~\ref{rcfobs} is  most conveniently explained by means of an example.   The expressions for fiducial vectors are usually extremely complicated, as can be seen from a casual inspection of refs.~\cite{exactSICsDB,scottgrassl,zauner}.  However, in dimension $7$ one has a SIC fiducial vector with the remarkably simple formula\footnote{\blue{There exists a fiducial vector in dimension~19 with a similarly simple formula~\cite{appleby05,scottgrassl}. The role of the Legendre symbol in these expressions is intriguiing, as is the fact that every component apart from the first has the same absolute value.  For the question, whether there exist other examples of such fiducials, see ref.~\cite{khatirinejad}}. }
\begin{align}
v &= \begin{pmatrix} v_0 \\ v_1 \\ \vdots \\ v_{6} \end{pmatrix},
\label{eq:fid7}
\end{align}
where
\begin{align}
v_n &= \begin{cases}
-2\sqrt{2} \qquad & \text{if $n= 0$}, \\ 1+ i  \genfrac{(}{)}{0.5 pt}{0}{n}{7}\sqrt{4\sqrt{2}-5} \qquad & \text{otherwise},
\end{cases}
\end{align}
$\genfrac{(}{)}{0.5 pt}{0}{n}{7}$ being the Legendre symbol.  The formula being so simple, this is the natural example to choose.

Using \emph{Magma}~\cite{magma} we compute the ray class fields $\mathfrak{R}_1$, $\mathfrak{R}$.  We find that $\mathfrak{R}$ is a degree $2$ extension of $K(\zeta_7)$.  In particular $[\mathfrak{R}\colon K] = 12$, $[\mathfrak{R}_1\colon K] = 6$. We then calculate the matrix elements of $\Pi_v$ and show that they all belong to $\mathfrak{R}$.  It follows that $\QQ(\Pi_v, \zeta_7)\subseteq \mathfrak{R}$.  Finally, we show   that $\QQ(\Pi_v, \zeta_7)$ is a degree $2$ extension of $K(\zeta_7)$.  We conclude that $\QQ(\Pi_v, \zeta_7) = \mathfrak{R}$, thereby verifying Proposition~\ref{rcfobs} for the case $d=7$.

\subsection{Propositions~\ref{units},~\ref{prp:ugp}} We begin by observing that~(i) \& (ii)\ $\implies$(iii)$\implies$(iv).  Indeed, the abelian structure imposes a unique complex conjugation operator~$\sigma_c\in\gsk$ upon every complex embedding. 
So for any~$g\in\gsk$,
\[
\sigma_c(g(u_d))=g(\sigma_c(u_d))=g(u_d^{-1})=(g(u_d))^{-1}.
\]
Since~$\gsk$ is transitive on the roots,~$f_d$ is a product of reciprocal polynomials of the form \hbox{$x^2-(u+u^{-1})x+1$}, hence is itself reciprocal. 

We next show that if (i) and (ii) hold for one strongly-centred fiducial  on a given $ \EAut^0(H)$-orbit then they hold for all.   Indeed, let $v$, $v'$ be two strongly centred fiducials on the same orbit.  It follows from the analysis in ref.~\cite{MAIB} that 
\begin{align}
\big\{ e^{i\theta_{v'}(\bfj)} \colon j \in  (\ZZ/d'\ZZ)^2 \big\} = \big\{ \zeta^{k_1 j_2 - k_2 j_1}_d e^{i\theta_{v}(p(\bfj))} \colon j \in  (\ZZ/d'\ZZ)^2 \}
\end{align}
for some $k\in (\ZZ/d'\ZZ)^2 $ and permutation $p$ fixing the point $\binom{0}{0}$.  The fact that both fiducials are strongly centred means $\zeta_d^{k_1}$, $\zeta^{k_2}_d\in \mathfrak{R}_1$.  Let $g$ be any element of $\Gal{\mathfrak{R}}{\QQ}$ mapping $\mathfrak{R}_1$ to $\mathfrak{R}_2$.  Then $g(\zeta_d^{k_j}) = \zeta_d^{\lambda k_j}$ for some integer $\lambda$ coprime to $d$.  In view of Corollary~\ref{cor:rayclassprops} this means that $\zeta_d^{\lambda k_1}$, $\zeta_d^{\lambda k_2}$  are both real.  It follows that if $k_1$, $k_2$ are both zero if $d$ is odd, and multiples of $d/2$ if $d$ is even.  We conclude that
\begin{align}
\big\{ e^{i\theta_{v'}(\bfj)} \colon j \in  (\ZZ/d'\ZZ)^2 \big\} = \big\{ s_j e^{i\theta_{v}(p(\bfj))} \colon j \in  (\ZZ/d'\ZZ)^2 \}
\end{align}
for some set of signs $s_j$.  The claim is now immediate.

The problem thus reduces to establishing parts (i) and (ii) for one particular strongly centred fiducial.  Since the fiducial specified by Eq.~\eqref{eq:fid7} is strongly-centred we illustrate the calculations with that.  It follows from Corollary~\ref{f1eqr1dims} that $\Sd = \Rd{1}$, $\Sdr =\Rd{}$.  Also we know from the previous sub-section that  $[\mathfrak{R}_1\colon K] = 6$, $[\mathfrak{R}\colon K] = 12$.  Using the function \emph{IndependentUnits} in \emph{Magma} we find that $\UU\cap U(1)$ is generated by
\begin{align}
v_1 &= \frac{1}{14}\left( \left(7+i \sqrt{35+28\sqrt{2}}\right) + \left(-14-i\sqrt{-308+224\sqrt{2}}\right) \cos\left(\frac{\pi}{7}\right) \right.
\\
&\hspace{6 cm} \left.
+\left(14\sqrt{2}+i\sqrt{-56+112\sqrt{2}}\right)\cos\left(\frac{2\pi}{7}\right) \right)
\nonumber
\\
v_2&= \frac{1}{7} \left( i \sqrt{-7+14\sqrt{2}} +\left(7-7\sqrt{2}-i\sqrt{-7+14\sqrt{2}}\right)\cos\left(\frac{\pi}{7}\right) \right.
\\
&\hspace{6 cm} \left. + \left(-7-i\sqrt{-77+56\sqrt{2}}\right)\cos\left(\frac{2\pi}{7}\right) \right)
\nonumber
\\
v_3&= \frac{1}{14} \left(\left(7-7\sqrt{2} + i \sqrt{-7+14\sqrt{2}}\right) +\left(14\sqrt{2} +i \sqrt{-56 +112\sqrt{2}}\right)  \cos\left(\frac{\pi}{7}\right)
\right.
\\
&\hspace{6 cm} \left. \left(14-14\sqrt{2} - i \sqrt{-28+56\sqrt{2}}\right)\cos\left(\frac{2\pi}{7}\right) \right)
\nonumber
\end{align}
together with $-1$.  On the other hand the normalized inner products take the 9 distinct values
\begin{align}
1,\quad v^{\vphantom{*}}_1, \quad v^{\vphantom{*}}_2,\quad v^{\vphantom{*}}_3,\quad -v^{\vphantom{*}}_1v^{\vphantom{*}}_2 v^{\vphantom{*}}_3,\quad v_1^{*},\quad v_2^{*},\quad v_3^{*},\quad -v_1^{*}v_2^{*}v_3^{*}
\end{align}
where $v_j^{*}$ is the complex conjugate of $v^{\vphantom{*}}_j$.  This establishes Proposition~\ref{prp:ugp} and part (i) of Proposition~\ref{units} for the case $d=7$.  Using \emph{Magma} one finds that the minimal polynomials over $K$ of these numbers are
\begin{align}
f_1(x) &= (x-1),
\\
f_2(x) & = \left(x+v^{\vphantom{*}}_1v^{\vphantom{*}}_2 v^{\vphantom{*}}_3\right) \left(x+v_1^{*}v_2^{*}v_3^{*}\right)
\\
&= x^2 + \left(\sqrt{2} - 1\right)x+ 1,
\nonumber
\\
f_3(x) &= \left(x-v^{\vphantom{*}}_1\right)\left(x-v^{\vphantom{*}}_2\right)\left(x-v^{\vphantom{*}}_3\right)\left(x-v^{*}_1\right)\left(x-v^{*}_2\right)\left(x-v^{*}_3\right)
\\
&=
x^6 + \left(\sqrt{2} - 2\right)x^5 + \left(\sqrt{2} - 2\right)x^4 + \left(-5\sqrt{2} + 9\right)x^3 
\nonumber
\\
& \hspace{5 cm} + \left(\sqrt{2} - 2\right)x^2 + \left(\sqrt{2} - 2\right)x + 1.
\nonumber
\end{align}
The fact that $f_3(x)$ is irreducible, has the same degree as $\mathfrak{R}_1/K$ and splits over $\mathfrak{R}_1$ means that its roots generate $\mathfrak{R}_1$.  Since $K = K(\sqrt{d+1}) = \QQ(\sqrt{d+1})$ this proves part (ii) of Proposition~\ref{units} for the case $d=7$. 

Finally, let us remark that in those dimensions where it is impracticable to compute $\UU$ it is still possible to compute $\VV$ by applying an integer relation algorithm to the logarithms of the normalized inner products.
}

\section*{Acknowledgments}
It is a pleasure to thank John Coates and Andrew Scott for their continuing support and guidance. 
Also thanks to Steve Donnelly for his help with the \emph{Magma} code and with interpreting the number-theoretic results; to James McKee and Chris Smyth for explaining various aspects of the theory of reciprocal units; and to Brian Conrad and to an anonymous referee for comments on an earlier draft.

This research was supported by the Australian Research Council via EQuS project number CE11001013, and SF acknowledges support from an Australian Research Council Future Fellowship FT130101744 and JY from National Science Foundation Grant No.\ 116143.
This research was supported in part by Perimeter Institute for Theoretical Physics.  Research at Perimeter Institute is supported by the Government of Canada through Innovation, Science and Economic Development Canada and by the Province of Ontario through the Ministry of Research, Innovation and Science.

\end{document}